\newcommand{\Z}{\mathbb{Z}}
\newcommand{\N}{\mathbb{N}}
\newcommand{\R}{\mathbb{R}}
\newcommand{\ld}{\mathrm{L}}
\newcommand{\esp}{(X,\mu)}
\newcommand{\espom}{(\Omega,\mu)}
\newcommand{\G}{\Gamma}
\newcommand{\g}{\gamma}
\newcommand{\La}{\Lambda}
\newcommand{\la}{\lambda}
\newcommand{\cogl}{c_{\G,\La}}
\newcommand{\cogldef}{\cogl\colon\G\times\XL\to\La}
\newcommand{\colg}{c_{\La,\G}}
\newcommand{\colgdef}{\colg\colon\La\times\XG\to\G}
\newcommand{\XG}{X_{\G}}
\newcommand{\XL}{X_{\La}}
\newcommand{\jg}{j_{1,\G}}
\newcommand{\jl}{j_{1,\La}}
\newcommand{\fg}{h_{\G}}
\newcommand{\fl}{h_{\La}}
\newtheorem{theorem}{Theorem}[section]
\newtheorem*{theorem*}{Theorem}
\newtheorem{corollary}[theorem]{Corollary}
\newtheorem*{corollary*}{Corollary}
\newtheorem{lemma}[theorem]{Lemma}
\newtheorem*{claim*}{Claim}
\newtheorem*{fact*}{Fact}
\newtheorem{theoremletter}{Theorem}
\newtheorem{corollaryletter}[theoremletter]{Corollary}
\theoremstyle{definition}
\newtheorem{definition}[theorem]{Definition}
\newtheorem{remark}[theorem]{Remark}
\newtheorem{question}[theorem]{Question}
\title{On the absence of quantitatively critical measure equivalence couplings}
\author{Corentin Correia}
\date{March 06, 2024}
\begin{document}
	
	\maketitle
	
	\begin{abstract}
		Given a measure equivalence coupling between two finitely generated groups, Delabie, Koivisto, Le Maître and Tessera have found explicit upper bounds on how integrable the associated cocycles can be. These bounds are optimal in many cases but the integrability of the cocycles with respect to these critical thresholds remained unclear. For instance, a cocycle from $\Z^{k+\ell}$ to $\Z^{k}$ can be $\ld^p$ for all $p<\frac{k}{k+\ell}$ but not for $p>\frac{k}{k+\ell}$, and the case $p=\frac{k}{k+\ell}$ was an open question which we answer by the negative. Our main result actually yields much more examples where the integrability threshold given by Delabie-Koivisto-Le Maître-Tessera Theorems cannot be reached.
	\end{abstract}
	
	{
		\small	
		\noindent\textbf{{Keywords:}} Quantitative measure equivalence, amenable groups, isoperimetric profile. 
	}
	
	\smallskip
	
	{
		\small	
		\noindent\textbf{{MSC-classification:}}	
		Primary 37A20; Secondary 20F65.
	}
	
	\section{Introduction}
	
	Measure equivalence is an equivalence relation on countable groups introduced by Gromov as a measured analogue of quasi-isometry. A first example of measure equivalent groups is given by two lattices in the same locally compact group.\par
	Another source of examples is provided by orbit equivalence. Two groups $\G$ and $\La$ are \textit{orbit equivalent} if there exist two free probability measure-preserving $\G$- and $\La$-actions $\alpha_{\G}$ and $\alpha_{\La}$ on a standard probability space $\esp$, having the same orbits. This yields measurable functions $\cogl\colon\G\times X\to\La$ and $\colg\colon\La\times X\to\G$ describing the distortions on the orbits, called the \textit{cocycles} and defined almost everywhere by the equations
	$$\alpha_{\G}(\g)x=\alpha_{\La}(\cogl(\g,x))x\ \text{and}\ \alpha_{\La}(\la)x=\alpha_{\G}(\colg(\la,x))x.$$
	More generally, the notion of measure equivalence also yields cocycles $\cogldef$ and $\colgdef$, where $(\XG,\mu_{\XG})$ and $(\XL,\mu_{\XL})$ are probability spaces arising from the measure equivalence coupling between the groups (see Section~\ref{secqme}).\par
	When the two groups are finitely generated, a stronger notion called $\ld^1$\textit{ measure equivalence} can be defined. It requires that the measurable functions $\left |\cogl(\g,\cdot)\right |_{S_{\La}}$ and $\left |\colg(\la,\cdot)\right |_{S_{\G}}$ are integrable for every $\g\in\G$ and $\la\in\La$, where ${|.|_{S_{\G}}}$ and ${|.|_{S_{\La}}}$ respectively denote the \textit{word-length metrics} with respect to some finite generating sets $S_{\G}$ and $S_{\La}$ of the groups. This definition does not depend on the choice of $S_{\G}$ and $S_{\La}$ and we simply say that $\cogl$ and $\colg$ are integrable. Many rigidity results have been uncovered in this context (see e.g.~\cite{baderIntegrableMeasureEquivalence2013} and~\cite{austinIntegrableMeasureEquivalence2016}). Most of the time, these results tell us that $\ld^1$ measure equivalence captures the geometry of the groups, in contrast to Ornstein-Weiss Theorem~\cite{ornsteinErgodicTheoryAmenable1980} which states that all infinite countable amenable groups are measure equivalent.\par
	
	To get finer rigidity results among finitely generated groups, Delabie, Koivisto, Le Maître and Tessera~\cite{delabieQuantitativeMeasureEquivalence2022} introduced more general quantitative restrictions on the cocycles. Given positive real numbers $p$ and $q$, we say that two finitely generated groups $\G$ and $\La$ are $(\ld^p,\ld^q)$ \textit{measure equivalent} (resp. $(\ld^p,\ld^q)$ \textit{orbit equivalent}) if there exists a measure equivalence (resp. an orbit equivalence) between them and the associated cocycles $\cogl$ and $\colg$ are respectively $\ld^p$ and $\ld^q$, i.e. the real-valued measurable functions $\left |\cogl(\g,\cdot)\right |_{S_{\La}}$ and $\left |\colg(\la,\cdot)\right |_{S_{\G}}$ are respectively $\ld^p$ and $\ld^q$ for every $\g\in\G$ and $\la\in\La$. We also replace $\ld^p$ or $\ld^q$ by $\ld^0$ when no requirement is made on the corresponding cocycle.\par
	We can also define $(\varphi,\psi)$-integrability measure equivalence (resp. orbit equivalence) for non-decreasing maps $\varphi,\psi\colon\R_+\to\R_+$ (see Definition~\ref{quantitative}). In particular, $\ld^p$ means that we consider the map $x\mapsto x^p$.\newline
	
	In the case of the groups $\Z^d$, for $d\geq 1$, Delabie, Koivisto, Le Maître and Tessera prove that there is no $(\ld^p,\ld^0)$ measure equivalence coupling from $\Z^{k+\ell}$ to $\Z^k$ for $p>\frac{k}{k+\ell}$ (\cite[Corollary 3.4]{delabieQuantitativeMeasureEquivalence2022}). On the other hand, they explicitly build a measure equivalence from $\Z^{k+\ell}$ to $\Z^k$ which is $(\ld^p,\ld^0)$ for every $p<\frac{k}{k+\ell}$ (\cite[Theorem~1.9]{delabieQuantitativeMeasureEquivalence2022}).\par
	The existence of a $(\ld^{\frac{k}{k+\ell}},\ld^0)$ measure equivalence coupling from $\Z^{k+\ell}$ to $\Z^k$ remained unclear (see also~\cite[Question 1.10]{delabieQuantitativeMeasureEquivalence2022}). Our contribution provides a negative answer to this question (see Corollary~\ref{cor4}), thus yielding the following complete description:
	
	\begin{theoremletter}[see Theorem~\ref{th3}]\label{corB}
		If $k$ and $\ell$ are positive integers, then there exists a $(\ld^p,\ld^0)$ measure equivalence coupling from $\Z^{k+\ell}$ to $\Z^k$ if and only if $p<\frac{k}{k+\ell}$.
	\end{theoremletter}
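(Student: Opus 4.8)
The plan is to split the equivalence into an elementary half and the genuinely new half. For the ``if'' direction, the existence of a $(\ld^p,\ld^0)$ coupling whenever $p<\frac{k}{k+\ell}$ is exactly the explicit construction of \cite[Theorem~1.9]{delabieQuantitativeMeasureEquivalence2022}, so nothing new is needed there. For the ``only if'' direction, I would first dispatch the range $p>\frac{k}{k+\ell}$ by quoting \cite[Corollary~3.4]{delabieQuantitativeMeasureEquivalence2022}, which already forbids a $(\ld^p,\ld^0)$ coupling. The whole difficulty is therefore concentrated in the single critical exponent $p=\frac{k}{k+\ell}$, which is what Theorem~\ref{th3} is designed to handle; below I sketch the mechanism in this special case.

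A natural starting point is the monotonicity of the isoperimetric profile under quantitative couplings. Writing $I_{G}(t)\asymp t^{1/d}$ for the (Følner) isoperimetric profile of $G=\Z^{d}$, a $(\varphi,\ld^0)$ coupling from $\G=\Z^{k+\ell}$ to $\La=\Z^{k}$ forces, in the paper's normalization, a comparison of the shape $\varphi\circ I_{\La}\preceq I_{\G}$. For $\varphi(x)=x^{p}$ this reads $t^{p/k}\preceq t^{1/(k+\ell)}$, i.e.\ $p\le\frac{k}{k+\ell}$, recovering \cite[Corollary~3.4]{delabieQuantitativeMeasureEquivalence2022} for $p>\frac{k}{k+\ell}$ but crucially permitting equality. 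At $p=\frac{k}{k+\ell}$ the two profiles are pure powers with the same exponent $\tfrac{1}{k+\ell}$, so this comparison holds only up to multiplicative constants and cannot by itself exclude a coupling; this borderline coincidence is the whole point, and it shows one must extract information beyond the leading power.

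The refinement I would carry out upgrades $\ld^{p}$-integrability at the critical exponent into a quantitative, scale-by-scale form of this monotonicity. Assuming a critical coupling existed, I would feed the layer-cake description of $\ld^{p}$-integrability --- a tail bound of order $s^{-p}$ for the distribution of $|\cogl(\g,\cdot)|_{S_{\La}}$ --- into an error-tracking version of the isoperimetric comparison, controlling at each dyadic scale how much the coupling can distort a Følner box of $\Z^{k}$ inside $\Z^{k+\ell}$. Summing these contributions over scales produces a necessary condition of harmonic type: convergence of a series comparable to $\int^{\infty}\frac{dt}{t}$ is required for the coupling to exist. Precisely at $p=\frac{k}{k+\ell}$ this quantity diverges, contradicting the finiteness forced by $\ld^{p}$-integrability of the cocycle, and this contradiction closes the critical case and hence the equivalence.

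The main obstacle is the middle step: making the isoperimetric monotonicity quantitatively sharp enough to see past the coincidence of leading exponents, since up-to-constant profile comparisons are worthless at criticality. I therefore expect the technical core of Theorem~\ref{th3} to be an explicit, error-controlled form of the coupling inequality, used here together with the fact that the isoperimetric profile of $\Z^{d}$ is an exact pure power $t^{1/d}$ with no slowly varying correction --- which is exactly what places the critical integral at the divergence threshold. Granting that quantitative inequality, the remaining $\Z^{d}$ computation and the divergence of the harmonic integral are routine.
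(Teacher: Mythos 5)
Your reduction is fine as far as it goes: the ``if'' half is indeed \cite[Theorem~1.9]{delabieQuantitativeMeasureEquivalence2022}, the range $p>\frac{k}{k+\ell}$ is \cite[Corollary~3.4]{delabieQuantitativeMeasureEquivalence2022}, and you correctly identify that the profile comparison $\varphi\circ\jl\preccurlyeq\jg$ of Theorem~\ref{th1} permits equality at $p=\frac{k}{k+\ell}$, so the whole content is the critical exponent. But your treatment of that critical case is not a proof: the entire mechanism rests on an ``error-tracking version of the isoperimetric comparison'' summed over dyadic scales, which you neither state nor prove, and which you yourself flag with ``granting that quantitative inequality.'' No such error-controlled refinement of \cite[Theorem~3.1]{delabieQuantitativeMeasureEquivalence2022} is available, and the claimed conclusion --- that the dyadic contributions form a harmonic-type series diverging exactly at $p=\frac{k}{k+\ell}$ --- is asserted without any supporting computation. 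Worse, the heuristic points the wrong way: $\ld^{k/(k+\ell)}$-integrability of the cocycle means $\int |\cogl(\g,\cdot)|^{k/(k+\ell)}\,\mathrm{d}\mu<+\infty$ by hypothesis, so no layer-cake/tail accounting on the $\ld^p$ side alone can produce a divergent quantity; an integrable function always has slack in its tails, and that slack is exactly what must be exploited, not contradicted.

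The paper's actual route avoids any refinement of the isoperimetric inequality and instead \emph{self-improves the integrability}: Lemma~\ref{lemma2} (the measured analogue of the elementary fact that a summable sequence $(u_n)$ admits a summable $(v_n)$ with $u_n=o(v_n)$) shows that if the cocycle is $\varphi$-integrable with $\varphi$ continuous, increasing and sublinear, it is automatically $\psi$-integrable for a subadditive $\psi$ with $\psi$ and $t\mapsto t/\psi(t)$ non-decreasing and $\varphi(x_k)=o(\psi(x_k))$ along a sequence $x_k\to+\infty$. One then applies the \emph{unrefined} Theorem~\ref{th1} to $\psi$ (its hypotheses are exactly what Lemma~\ref{lemma2} arranges), obtaining $\psi\circ\jl\preccurlyeq\jg$, i.e.\ $\psi\preccurlyeq\jg\circ\jl^{-1}=\varphi$ up to the multiplicative constants controlled by Assumptions~\eqref{hyp2} and~\eqref{hyp3} (trivially satisfied for the pure powers of $\Z^d$); this contradicts $\varphi(x_k)=o(\psi(x_k))$. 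So the contradiction is between ``integrability is never maximal'' and ``integrability is capped at $\varphi$,'' a soft argument with no scale-by-scale error analysis. To repair your proposal you would have to either prove your hypothetical quantitative coupling inequality from scratch --- a substantial and uncertain undertaking --- or replace the middle step by this self-improvement lemma, at which point you recover the paper's proof of Corollary~\ref{cor4} and hence Theorem~\ref{th3}.
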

	
	The absence of measure equivalence coupling from $\Z^{k+\ell}$ to $\Z^k$ with the critical integrability $(\ld^{\frac{k}{k+\ell}},\ld^0)$ was the initial goal of the paper. As we will see later in this introduction, this is actually a particular case of more general statements (see Theorems~\ref{thA} and~\ref{thB}).\par
	Our result relies on the following key lemma (Lemma~\ref{lemma2} in the easier case $\Gamma=\Z^{k+\ell}$ and $\La=\Z^k$): given a measure equivalence coupling from $\Z^{k+\ell}$ and $\Z^k$, if a cocycle is $\varphi$-integrable, then it is $\psi$-integrable for another non-decreasing map $\psi\colon\R_+\to\R_+$ such that $\psi(x)=O(\varphi(x))$ does \emph{not} hold as $x$ goes to $+\infty$. If now we assume that the cocycle $c_{\Z^k,\Z^{k+\ell}}$ is $\varphi$-integrable where $\varphi(x)=x^{\frac{k}{k+\ell}}$, we can combine this with a more precise version of \cite[Corollary 3.4]{delabieQuantitativeMeasureEquivalence2022}: by their Theorem~3.1, we must have $\psi(x)=O(\varphi(x))$, a contradiction, thus proving our result. It is interesting to note that while the statement does not mention $\varphi$-integrability, its proof crucially uses it.\par
	This key lemma is a natural adaptation of the following elementary, yet fundamental fact.
	
	\begin{fact*}
		Let $(u_n)_{n\in\N}$ be a sequence of non-negative real numbers which is summable. Then there exists a sequence $(v_n)_{n\in\N}$ of non-negative real numbers which is summable and such that $u_n=o(v_n)$.
	\end{fact*}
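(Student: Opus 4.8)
The plan is to exhibit an explicit summable majorant built from the tails of the series. Write $R_n = \sum_{k \ge n} u_k$ for the $n$-th tail; since $\sum_n u_n$ converges, the sequence $(R_n)_{n\in\N}$ is non-increasing and tends to $0$. The natural candidate I would propose is
\[
v_n = \frac{u_n}{\sqrt{R_n}},
\]
the point being that we amplify $u_n$ by the diverging factor $1/\sqrt{R_n}\to+\infty$, which grows slowly enough to preserve summability but fast enough to force $u_n=o(v_n)$. Indeed, whenever $R_n>0$ we have $u_n=\sqrt{R_n}\,v_n$ with $\sqrt{R_n}\to 0$, so the relation $u_n=o(v_n)$ is immediate.

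It then remains to check that $(v_n)_{n\in\N}$ is summable, and this is where the tail bookkeeping pays off. First I would record the elementary inequality
\[
\frac{R_n-R_{n+1}}{\sqrt{R_n}}\le 2\bigl(\sqrt{R_n}-\sqrt{R_{n+1}}\bigr),
\]
which follows from the factorization $R_n-R_{n+1}=(\sqrt{R_n}-\sqrt{R_{n+1}})(\sqrt{R_n}+\sqrt{R_{n+1}})$ together with the bound $(\sqrt{R_n}+\sqrt{R_{n+1}})/\sqrt{R_n}\le 2$. Since $u_n=R_n-R_{n+1}$, summing this estimate telescopes:
\[
\sum_{n} v_n \le 2\sum_{n}\bigl(\sqrt{R_n}-\sqrt{R_{n+1}}\bigr)=2\sqrt{R_0}<+\infty,
\]
using once more that $\sqrt{R_n}\to 0$. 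This delivers both required properties at the same time.

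The only genuine obstacle is the degenerate case where $R_n=0$ for some $n$, i.e. the series has finite support or an eventually vanishing tail, in which the defining formula for $v_n$ would divide by zero. I would dispose of this by a clean convention: set $v_n=0$ whenever $R_n=0$ (equivalently, whenever $u_k=0$ for all $k\ge n$), so that $v_n=0=u_n$ at those indices and they contribute nothing to the sum. With this convention the identity $u_n=\sqrt{R_n}\,v_n$ holds for \emph{every} $n$, with $\varepsilon_n:=\sqrt{R_n}\to 0$, which is precisely $u_n=o(v_n)$, while the telescoping bound above is untouched. Everything else is routine, so I do not anticipate any further difficulty.
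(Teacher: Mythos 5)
Your proof is correct, but it takes a genuinely different route from the paper's. The paper amplifies $u_n$ by a piecewise-constant factor: it chooses thresholds $N_1=0<N_2<N_3<\ldots$ with $\sum_{n\geq N_k}u_n\leq k^{-3}$ for $k\geq 2$, sets $v_n=ku_n$ on the block $N_k\leq n<N_{k+1}$, and checks summability against $\sum_k k^{-2}$. You instead use the pointwise weight $1/\sqrt{R_n}$ built from the tails $R_n=\sum_{k\geq n}u_k$, with the telescoping bound
\[
\sum_n \frac{u_n}{\sqrt{R_n}}\leq 2\sum_n\bigl(\sqrt{R_n}-\sqrt{R_{n+1}}\bigr)=2\sqrt{R_0},
\]
which is the classical Abel--Dini estimate; all your steps check out, including the factorization argument and the convention at indices with $R_n=0$. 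Your version is shorter, fully explicit (a closed formula for $v_n$ with a quantitative bound on $\sum v_n$), whereas the paper's block construction is the form that gets recycled in Lemma~\ref{lemma2}: there the same thresholds $N_k$ must work \emph{simultaneously} for $\ell$ sequences, and the amplification factor $K_n=k$ is integer-valued, non-decreasing and constant on blocks, which is exactly the structure used to build the piecewise-linear concave map $\psi$. (Your weight adapts to several sequences too, e.g.\ by taking $R_n=\sum_{i}\sum_{k\geq n}u^{(i)}_k$, but the block structure is what the paper exploits downstream.) One cosmetic remark: at indices where $v_n=0$, your ``$u_n=o(v_n)$'' holds only in the dominated sense $u_n\leq\varepsilon_n v_n$ with $\varepsilon_n\to 0$; the paper's proof has the identical feature (its $v_n=ku_n$ vanishes whenever $u_n$ does), and if one insists on strict little-$o$ both proofs are fixed by replacing $v_n$ with $v_n+2^{-n}$.
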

	
	\begin{proof}[Proof of the fact]
		We can find an increasing sequence $(N_k)_{k\geq 1}$ of positive integers satisfying $N_1=0$ and $\sum_{n\geq N_k}^{+\infty}{u_n}\leq \frac{1}{k^3}$ for every $k\geq 2$.
		Then for every integer $n\geq 0$, we define $v_n\coloneq ku_n$ if $N_k\leq n<N_{k+1}$. We have
		$$\sum_{n=0}^{+\infty}{v_n}=\sum_{k=1}^{+\infty}{\sum_{n=N_k}^{N_{k+1}-1}{ku_n}}\leq\sum_{k=1}^{+\infty}{\frac{1}{k^2}}<+\infty$$
		and $u_n=o(v_n)$.
	\end{proof}

	Let us now present generalizations to other groups, using the isoperimetric profile (Theorem~\ref{thA}) and then the growth (Theorem~\ref{thB}). First, recall that given non-decreasing real-valued functions $f$ and $g$ defined on a neighborhood of $+\infty$, we say that $f$ is \textit{asymptotically less} than $g$, denoted by $f\preccurlyeq g$, if there exists a constant $C>0$ such that $f(x)=O\left (g(Cx)\right )$ as $x\to +\infty$. We say that $f$ is \textit{asymptotically equivalent} to $g$, denoted by $f\approx g$, if $f\preccurlyeq g$ and $f\succcurlyeq g$. The \textit{asymptotic behavior} of $f$ is its equivalence class modulo $\approx$.\par
	Given a finitely generated group $\G$, its \textit{isoperimetric profile} is a real-valued function $\jg$ defined on the set of positive integers and given modulo $\approx$ by the formula
	$$\jg(x)\approx\sup_{A\subset\G, |A|\leq x}{\frac{|A|}{|\partial A|}},$$
	where $\partial A\coloneq S_{\G}A\ \Delta\ A$ and $S_{\G}$ is a finite generating subset of $\G$. It has been computed for many groups, for instance $j_{1,\mathbb{Z}^d}(x)\approx x^{1/d}$ \cite{coulhonRandomWalksGeometry2000}, $j_{1,(\mathbb{Z}/2\mathbb{Z})\wr\Z}(x)\approx\log{x}$ \cite{erschlerIsoperimetricProfilesFinitely2003}, where $(\mathbb{Z}/2\mathbb{Z})\wr\Z$ is a lamplighter group (the definition is recalled in Section~\ref{secappwr}). Note that it is an unbounded function if and only if the group is amenable. It can thus be interpreted as a measurement of amenability: the faster it goes to infinity, the "more amenable" the group is. We refer the reader to \cite{delabieQuantitativeMeasureEquivalence2022} for more details on the isoperimetric profile and more generally the $\ell^p$-isoperimetric profile.\par
	Now we state the theorem of Delabie, Koivisto, Le Maître and Tessera on the behaviour of the isoperimetric profile under quantitative measure equivalence.
	
	\begin{theorem}[{\cite[Theorem~1.1]{delabieQuantitativeMeasureEquivalence2022}}]\label{th1}
		Let $\varphi\colon\R_+\to\R_+$ be a function such that $\varphi$ and $t\mapsto t/\varphi(t)$ are non-decreasing, let $\G$ and $\La$ be finitely generated groups. Assume that there exists a $(\varphi,\ld^0)$-integrable measure equivalence coupling from $\G$ to $\La$. Then their isoperimetric profiles satisfy the following asymptotic inequality:
		$$\varphi\circ\jl\preccurlyeq\jg.$$
	\end{theorem}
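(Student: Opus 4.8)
The plan is to transport a near-optimal Følner set of $\La$ through the coupling to $\G$, using the $\varphi$-integrability of the cocycle to convert its displacement into an amplification of the isoperimetric ratio by $\varphi$. I would realize the coupling so that $\G$ acts on the probability space $(\XL,\mu_{\XL})$ and the cocycle $\cogl\colon\G\times\XL\to\La$ satisfies $\int_{\XL}\varphi(|\cogl(s,x)|_{S_{\La}})\,d\mu_{\XL}(x)<+\infty$ for each generator $s\in S_{\G}$. Fix a scale $n$ and choose a finite set $A\subset\La$ with $|A|\approx n$ that nearly realizes the profile, so that $|A|/|\partial A|\approx\jl(n)=:R$. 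For each $x\in\XL$ set $B_x:=\{\g\in\G : \cogl(\g,x)\in A\}$. Since the coupling is measure-preserving, each element of $\La$ is ``hit'' a constant amount on average, so $\int_{\XL}|B_x|\,d\mu_{\XL}(x)$ is comparable to $|A|\approx n$: the transported sets have the right size.

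The core of the argument is the boundary estimate for $B_x$. Using the cocycle identity $\cogl(s\g,x)=\cogl(s,\g\cdot x)\,\cogl(\g,x)$, a point $\g\in B_x$ lies in $\partial B_x$ only if, for some generator $s$, left multiplication by $\cogl(s,\g\cdot x)$ pushes $\cogl(\g,x)\in A$ outside of $A$; writing $d:=|\cogl(s,\g\cdot x)|_{S_{\La}}$, this forces $\cogl(\g,x)$ to lie within word-distance $d$ of the complement of $A$. Telescoping this displacement along a geodesic in $\La$ (a discrete co-area inequality) then lets one bound $\int_{\XL}|\partial B_x|\,d\mu_{\XL}$ by a sum, over displacements $d$, of $|\partial A|$ weighted by the distribution of $|\cogl(s,\cdot)|_{S_{\La}}$.

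To finish I would split this sum at a threshold $T$. For $d\le T$, the monotonicity of $t\mapsto t/\varphi(t)$ gives $d\le\frac{T}{\varphi(T)}\varphi(d)$, so the small-displacement part is controlled by $\frac{T}{\varphi(T)}\int\varphi(|\cogl(s,\cdot)|_{S_{\La}})\,d\mu_{\XL}$, that is, by a constant multiple of $|\partial A|\cdot\frac{T}{\varphi(T)}$. For $d>T$, Markov's inequality applied to $\varphi(|\cogl(s,\cdot)|_{S_{\La}})$ bounds the measure of the bad set by a constant multiple of $1/\varphi(T)$, contributing at most a constant multiple of $|A|/\varphi(T)$. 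Choosing $T:=R$ balances the two terms and yields $\int_{\XL}|\partial B_x|\,d\mu_{\XL}\le C\,|A|/\varphi(R)$. Since the mean volume is bounded below by a constant multiple of $n$ while the mean boundary is at most $C\,n/\varphi(R)$, a Markov/averaging argument produces a single $x$ with $|B_x|$ comparable to $n$ and $|\partial B_x|\le C'\,n/\varphi(R)$, hence $|B_x|/|\partial B_x|\ge c\,\varphi(R)=c\,\varphi(\jl(n))$. This gives $\jg(C''n)\succcurlyeq\varphi(\jl(n))$, i.e. $\varphi\circ\jl\preccurlyeq\jg$.

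The step I expect to be the main obstacle is the co-area/inner-boundary control together with the precise bookkeeping of the measure-preservation identities: turning the pointwise description of $\partial B_x$ into the clean averaged inequality requires the Fubini-type identities of the coupling — relating summation over $\G$, integration over $\XL$, and the push-forward of the cocycle — to be set up carefully, and it is exactly here that both hypotheses on $\varphi$, namely the monotonicity of $\varphi$ and of $t\mapsto t/\varphi(t)$, are genuinely used.
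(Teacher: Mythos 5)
You should note at the outset that this paper does not prove Theorem~\ref{th1} at all: it is imported verbatim from Delabie--Koivisto--Le~Ma\^itre--Tessera, so the only meaningful comparison is with the proof in that reference. Your analytic core is exactly the right mechanism, and it is essentially theirs: the telescoped translation bound $|A\,\Delta\,gA|\leq C\,|g|_{S_{\La}}\,|\partial A|$, combined with the split at the threshold $T=R\approx\jl(n)$, which amounts to the single inequality $\min\left (2|A|,\,Cd\,|\partial A|\right )\leq C'\,|A|\,\varphi(d)/\varphi(R)$ (for $d\leq R$ use that $t\mapsto t/\varphi(t)$ is non-decreasing, for $d>R$ that $\varphi$ is non-decreasing, via Markov). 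This is precisely where the two hypotheses on $\varphi$ enter in the cited proof as well, so on that front your proposal is faithful.

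The genuine gap is in the transport bookkeeping, exactly at the step you flagged as the main obstacle. You fiber over $x\in\XL$, set $B_x=\{\g\in\G:\cogl(\g,x)\in A\}$, and assert that ``each element of $\La$ is hit a constant amount on average'', i.e. $\int_{\XL}|B_x|\,\mathrm{d}\mu\asymp|A|$. This is unjustified: $\#\{\g:\cogl(\g,x)=\la\}=m(\la\ast x)$, where $m(\omega)=\#\{\g\in\G:\g\ast\omega\in\XL\}$ is a $\G$-invariant multiplicity function (a local coupling index) that is in general neither bounded below nor above on the sets $\la\ast\XL$. Already for the standard index-$\alpha$ coupling between $\Z$ and $\Z$ (translations on $\R$ with $\XG=[0,1)$, $\XL=[0,\alpha)$, $\alpha<1$) one has $\G\ast\XL\subsetneq\Omega$, and for a non-ergodic coupling given by a disjoint union of couplings whose local indices tend to infinity, $\int_{\XL}|B_x|\,\mathrm{d}\mu$ can even diverge; the same weight $m$ contaminates your averaged boundary estimate, since the change of variables $y=\g\cdot x$ reintroduces it. The Fubini identities are exact only when you sum $\G$-orbits over the $\G$-fundamental domain: setting $B_x:=\{\g:\g\ast x\in\bigcup_{\la\in A}\la\ast\XL\}$ for $x\in\XG$ gives $\int_{\XG}|B_x|\,\mathrm{d}\mu=|A|\,\mu(\XL)$ exactly, and the averaged boundary becomes a weight-free integral of $|A\,\Delta\,\cogl(s,y)A|$ over $y\in\XL$, to which your threshold inequality applies verbatim. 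Your endgame also needs repair: to feed $B_x$ into $\jg$ you need an \emph{upper} bound $|B_x|\leq Cn$ at the selected fiber, and a first-moment lower bound on $|B_x|$ does not localize, so separate Markov estimates do not suffice; the cited proof sidesteps fiber selection altogether by inducing test functions through the coupling and comparing isoperimetric profiles at the level of the measured space rather than extracting a single good $x$.
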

	
	If $\jl$ is injective, then $\varphi\circ\jl\preccurlyeq\jg$ means that there exists a constant $C>0$ such that the following holds as $x$ goes to $+\infty$:
	\begin{equation}\label{bound}
		\varphi(x)=O(\jg(C\jl^{-1}(x))),
	\end{equation}
	so Theorem~\ref{th1} provides upper bounds $x\mapsto\jg(C\jl^{-1}(x))$ for $C>0$. In order to generalize our first contribution ("there is no $(\ld^{\frac{k}{k+\ell}},\ld^0)$ measure equivalence coupling from $\Z^{k+\ell}$ from $\Z^k$") to other groups, we must pay attention to a few obstacles which do not appear in the case $\G=\Z^{k+\ell}$ and $\La=\Z^{k}$.
	\begin{itemize}
		\item The isoperimetric profile of a finitely generated group $\La$ is not necessarily injective, so \eqref{bound} is not well-defined in full generality. But when studying this function, we only take into account its asymptotic behaviour. Moreover, we will check that it suffices to consider an injective function $h_{\La}$ with the same asymptotic behavior (the existence of such a map is granted by Remark~\ref{isoinj}).
		\item Given two different positive constants $C$ and $C'$, we do not know if the functions $\jg(C\jl^{-1}(.))$ and $\jg(C'\jl^{-1}(.))$ have the same asymptotic behavior, so Theorem~\ref{th1} does not provide a precise upper bound of $\varphi$ \textit{a priori}. This is the reason why we will assume that the isoperimetric profile of $\G$ satisfies $\jg(Cx)=O(\jg(x))$ for every $C>0$. For other technical reasons arising from the existence of a constant in the definition of "$\varphi$-integrability" (see Definition~\ref{quantitative}), we will also require this hypothesis on $\jg\circ\jl^{-1}$. These requirements motivate Assumptions~\eqref{hyp2} and~\eqref{hyp3} in Theorem~\ref{thA} below.
		\item In Lemma~\ref{lemma2}, where we build a new map $\psi$ from the original one $\varphi\coloneq \jg\circ \jl^{-1}$ (for the case $\G=\Z^{k+\ell}$ and $\La=\Z^{k}$, see the paragraph after the proof of the elementary fact), we need $\varphi$ to be sublinear\footnote{This is necessary to assume that $\jg\circ\jl^{-1}$ is sublinear. Indeed, we cannot apply the same strategy in the case $\G=\Z$ and $\La=\Z^{2}$, since Escalier and Joseph have built a measure equivalence coupling from $\Z$ to $\Z^{2}$ which is $(\ld^{\infty},\ld^{p})$ for every $p<\frac{1}{2}$ (not yet published work).}, hence Assumption~\eqref{hyp1} in Theorem~\ref{thA}.
	\end{itemize}
	
	Hence, a first generalization is the following.
	
	\begin{theoremletter}\label{thA}
		Let $\G$ and $\La$ be finitely generated groups. Assume that there exist a non-decreasing function $\fg$ and an increasing function $\fl$ satisfying $\fg\approx\jg$, $\fl\approx\jl$ and the following assumptions as $x\to +\infty$:
		\begin{equation}\label{hyp1}
			\fg(x)=o\left (\fl(x)\right ),
		\end{equation}
		\begin{equation}\label{hyp2}
			\forall C>0,\ \fg(Cx)=O\left (\fg(x)\right ),
		\end{equation}
		\begin{equation}\label{hyp3}
			\forall C>0,\ \fg\circ \fl^{-1}(Cx)=O\left (\fg\circ \fl^{-1}(x)\right ).
		\end{equation}
		Then there is no $(\fg\circ \fl^{-1},\ld^0)$-integrable measure equivalence coupling from $\G$ to $\La$.
	\end{theoremletter}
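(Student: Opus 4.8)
The plan is to argue by contradiction, combining the key lemma (Lemma~\ref{lemma2}) with Theorem~\ref{th1}. Set $\varphi\coloneq\fg\circ\fl^{-1}$ and suppose there is a $(\varphi,\ld^0)$-integrable measure equivalence coupling from $\G$ to $\La$. First I would record that $\varphi$ is sublinear: substituting $y=\fl(x)$ in Assumption~\eqref{hyp1} and using that $\fl$ is increasing with $\fl(x)\to+\infty$ gives $\varphi(y)=o(y)$ as $y\to+\infty$. This is precisely the regime in which the key lemma applies, and it produces a non-decreasing map $\psi$ — with $\psi$ and $t\mapsto t/\psi(t)$ non-decreasing, so that Theorem~\ref{th1} is applicable to it — such that the coupling is also $(\psi,\ld^0)$-integrable while $\psi(x)=O(\varphi(x))$ \emph{fails} as $x\to+\infty$. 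The goal is then to contradict this last failure by proving $\psi(x)=O(\varphi(x))$.

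To this end I would apply Theorem~\ref{th1} to the $(\psi,\ld^0)$-integrable coupling, obtaining $\psi\circ\jl\preccurlyeq\jg$, i.e. $\psi(\jl(x))=O(\jg(C_0x))$ for some $C_0>0$. The core of the argument is to convert this inequality, which is stated up to rescaling and for the genuine profiles $\jg,\jl$, into a clean domination $\psi=O(\varphi)$ for the chosen representatives $\fg,\fl$. I would proceed in three moves. First, using $\jg\preccurlyeq\fg$ followed by Assumption~\eqref{hyp2} to absorb the rescaling constant, turn the right-hand side into $\fg$, yielding $\psi(\jl(x))=O(\fg(x))$. Second, using $\fl\preccurlyeq\jl$ together with the monotonicity of $\psi$ (and \eqref{hyp2} once more), replace $\jl$ by $\fl$ on the left, reaching an estimate of the form $\psi\!\left(\tfrac1M\fl(u)\right)=O(\fg(u))$. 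Third, set $t=\tfrac1M\fl(u)$, so $u=\fl^{-1}(Mt)$ (here the injectivity of $\fl$ is used), and rewrite this as $\psi(t)=O(\fg(\fl^{-1}(Mt)))=O(\varphi(Mt))$; a final application of Assumption~\eqref{hyp3} to $\varphi=\fg\circ\fl^{-1}$ removes the constant $M$ and gives $\psi(t)=O(\varphi(t))$, the desired contradiction.

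I expect the bookkeeping of the two kinds of constants — the multiplicative $O$-constants and the rescaling constants hidden in $\preccurlyeq$ — to be the main obstacle, and this is exactly what Assumptions~\eqref{hyp2} and~\eqref{hyp3} are tailored to control: \eqref{hyp2} tames the rescaling coming from $\jg\preccurlyeq\fg$ and from passing $\fl\preccurlyeq\jl$ through $\psi$, while \eqref{hyp3} handles the constant $M$ that survives inside $\fl^{-1}$ and, more fundamentally, the constant built into the definition of $\varphi$-integrability. Two points require care along the way: one must check that the representative $\fl$ can be taken increasing (hence invertible) with the same asymptotic behaviour as $\jl$, granted by Remark~\ref{isoinj}, and that the map $\psi$ furnished by the key lemma indeed satisfies the monotonicity hypotheses needed to apply Theorem~\ref{th1}. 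Assumption~\eqref{hyp1} plays no role in the final contradiction itself; its sole function is to make $\varphi$ sublinear so that the key lemma can be invoked at the outset.
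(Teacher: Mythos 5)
Your proposal is correct and follows essentially the same route as the paper's proof: argue by contradiction, note that $\varphi=\fg\circ\fl^{-1}$ is sublinear by~\eqref{hyp1}, apply Lemma~\ref{lemma2} to obtain a $(\psi,\ld^0)$-integrable coupling with $\varphi(x_k)=o(\psi(x_k))$, feed $\psi$ (monotone with $t\mapsto t/\psi(t)$ non-decreasing) into Theorem~\ref{th1}, and then use $\fg\approx\jg$, $\fl\approx\jl$ together with~\eqref{hyp2} and~\eqref{hyp3} to convert $\psi\circ\jl\preccurlyeq\jg$ into $\psi(t)=O(\varphi(t))$, the desired contradiction. The only cosmetic divergence is in the constant bookkeeping: where you keep the constant from $\fl\preccurlyeq\jl$ inside $\psi$ as $\tfrac1M$ and dispose of it via~\eqref{hyp3} after the substitution $t=\tfrac1M\fl(u)$, the paper extracts it using the subadditivity bound $\psi(cx)\le\lceil c\rceil\psi(x)$, and it performs explicitly (rather than merely flagging, as you do in your closing remarks) the initial use of~\eqref{hyp3} to normalize the integrability constants $c_{\g_i}$ to $1$ before invoking the lemma.
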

	
	\begin{remark}\label{isoinj}
		The isoperimetric profile of a finitely generated group $\G$ is always asymptotically equivalent to an increasing function $\fg$. For instance, if $\jg$ satisfies
		$$0<\jg(n-1)<\jg(n)=\ldots=\jg(n+k-1)<\jg(n+k)$$
		for some positive integers $n$ and $k$, then we can set
		$$\fg(n+i)\coloneq \frac{k-i}{k}\jg(n)+\frac{i}{k}\min{(\jg(n+k),2\jg(n))}$$
		for every $i\in\{0,\ldots,k-1\}$. We do not provide the details.\par
		It is straightforward to check that the equivalence relation $\approx$ preserves Assumption~\eqref{hyp2} for a non necessarily injective function. Moreover satisfying Assumptions~\eqref{hyp1} and~\eqref{hyp2} is also preserved under this equivalence relation, as well as satisfying Assumptions~\eqref{hyp1},~\eqref{hyp2} and~\eqref{hyp3} when the inverse of one of the functions is well-defined.
	\end{remark}
	
	\begin{question}\label{q3}
		Does the isoperimetric profile of a finitely generated group always satisfy Assumption~\eqref{hyp2}? In the case $\jg(x)=o(\jl(x))$, does there always exist a pair $(\fg,\fl)$ of functions satisfying the assumptions of Theorem~\ref{thA}?
	\end{question}
	
	The following corollary allows us to answer a question of Delabie, Koivisto, Le Maître and Tessera (see~\cite[Question 1.2]{delabieQuantitativeMeasureEquivalence2022}) by the negative for many of finitely generated group~$\G$.
	
	\begin{corollaryletter}[see Corollary~\ref{cor3}]
		Let $\G$ be a finitely generated group which is not virtually cyclic. Assume that its isoperimetric profile $\jg$ satisfies
		\begin{equation}\label{hyp8}
			\forall C>0,\ \jg(Cx)=O\left (\jg(x)\right )\text{ as }x\to +\infty.
		\end{equation}
		Then there is no $(\jg,\ld^0)$-integrable measure equivalence coupling from $\G$ to $\Z$.
	\end{corollaryletter}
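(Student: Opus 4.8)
The plan is to apply Theorem~\ref{thA} with $\La=\Z$. Since $j_{1,\Z}(x)\approx x$, I would choose $\fl(x)=x$, which is increasing and satisfies $\fl\approx\jl$; then $\fl^{-1}(x)=x$ and $\fg\circ\fl^{-1}=\fg$. For $\fg$ I take, using Remark~\ref{isoinj}, a non-decreasing function with $\fg\approx\jg$. With these choices the three hypotheses of Theorem~\ref{thA} simplify drastically: Assumption~\eqref{hyp2} for $\fg$ is exactly the doubling hypothesis~\eqref{hyp8} transported along $\approx$ (which preserves~\eqref{hyp2} by Remark~\ref{isoinj}), and Assumption~\eqref{hyp3} coincides with~\eqref{hyp2} because $\fl^{-1}$ is the identity. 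Thus the only genuine point to verify is Assumption~\eqref{hyp1}, i.e. $\fg(x)=o(x)$, equivalently $\jg(x)=o(x)$.

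The crux, and what I expect to be the main obstacle, is therefore to prove that a finitely generated group that is not virtually cyclic has strictly sublinear isoperimetric profile. I would go through the growth function $V$ of $\G$. On one hand, the Coulhon--Saloff-Coste isoperimetric inequality always gives $\jg\preccurlyeq V^{-1}$, where $V^{-1}$ denotes the inverse growth. On the other hand, an infinite group that is not virtually cyclic has growth $V(n)\succcurlyeq n^{2}$: groups of linear growth are precisely the infinite virtually cyclic ones, and by Gromov's polynomial growth theorem together with the Bass--Guivarc'h formula a group of polynomial growth has integer degree, so a non virtually cyclic one has degree at least $2$; any group of superpolynomial growth satisfies $V(n)\succcurlyeq n^{d}$ for all $d$, hence $\succcurlyeq n^{2}$, which also covers the non-amenable case since non-amenability forces exponential growth. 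Combining the two bounds gives $\jg\preccurlyeq V^{-1}\preccurlyeq x^{1/2}=o(x)$, establishing~\eqref{hyp1}.

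Once~\eqref{hyp1} is in hand, Theorem~\ref{thA} yields that there is no $(\fg\circ\fl^{-1},\ld^0)=(\fg,\ld^0)$-integrable measure equivalence coupling from $\G$ to $\Z$. It remains to phrase this in terms of $\jg$ itself: since $\fg\approx\jg$ we have in particular $\fg\preccurlyeq\jg$, so, using the constant built into the definition of $\varphi$-integrability together with the doubling~\eqref{hyp8}, every $\jg$-integrable cocycle is $\fg$-integrable; hence the existence of a $(\jg,\ld^0)$-integrable coupling would contradict the statement just obtained. This proves the corollary. The only nontrivial input is the sublinearity step of the previous paragraph, which is the sole place where the hypothesis ``not virtually cyclic'' enters; the verification of the hypotheses of Theorem~\ref{thA} is otherwise routine, the identity choice $\fl(x)=x$ being what trivializes~\eqref{hyp3}.
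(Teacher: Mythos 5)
Your proposal is correct and follows essentially the same route as the paper: reduce to Theorem~\ref{thA} via Remark~\ref{isoinj}, with $\fl$ the identity so that Assumption~\eqref{hyp3} collapses to Assumption~\eqref{hyp2} (the doubling hypothesis~\eqref{hyp8} transported along $\approx$), the only substantive point being that a non virtually cyclic group satisfies $\jg(x)=o(x)$, which both you and the paper obtain from the Coulhon--Saloff-Coste inequality combined with the at-least-quadratic volume growth of such groups. The sole difference is in sourcing that growth bound: the paper cites it directly (Mann's book, Corollary~3.5, where it is proved elementarily), whereas you rederive it via Gromov's polynomial growth theorem and the Bass--Guivarc'h formula (plus the van den Dries--Wilkie refinement implicitly needed for your claim that superpolynomial growth dominates every $n^{d}$), a heavier but valid route.
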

	
	Given an increasing function satisfying a mild regularity condition, Brieussel and Zheng~\cite{brieusselSpeedRandomWalks2021} build a group whose isoperimetric profile is asymptotically equivalent to this function. It turns out that this regularity condition implies our condition~\eqref{hyp8} (see Section~\ref{secappZ}). Moreover, if $\G$ is such a group\footnote{We call it a Brieussel-Zheng group, although their construction is more general.}, it follows from the work of Escalier~\cite{escalierBuildingPrescribedQuantitative2024} that there exists an orbit equivalence from $\G$ to $\Z$ which is almost $(\jg,\ld^0)$-integrable, thus providing a complete description similar to Theorem~\ref{corB} (see Theorem~\ref{BrieusselZhengEscalier}).\par
	Explicit constructions of orbit equivalences in~\cite{delabieQuantitativeMeasureEquivalence2022} show that the upper bound given in~Theorem~\ref{th1} is sharp for other groups than $\Z^d$, such as lamplighter groups or iterated wreath products. The existence of a measure equivalence coupling with this critical threshold remained unclear and our Theorem~\ref{thA} enables us to answer by the negative. We refer the reader to Theorems~\ref{th4},~\ref{th5},~\ref{th6} and~\ref{th7} for precise statements.\newline
	
	Another rigidity result in~\cite{delabieQuantitativeMeasureEquivalence2022} deals with the notion of volume growth. Given a finitely generated group $\G$ and finite generating set $S_{\G}$ of $\G$, we define
	$$V_{\G}(n)\coloneq \left |\left \{\gamma_1\ldots\gamma_n\mid\gamma_1,\ldots,\gamma_n\in S_{\G}\cup (S_{\G})^{-1}\cup\{e_\G\}\right \}\right |$$
	for every positive integer $n$, where $e_\G$ denotes the identity element of $\G$. As for the isoperimetric profile, we extend $V_{\G}$ to a continuous and non-decreasing function. The \textit{volume growth} of $\G$ is the asymptotic behavior of $V_{\G}$, it does not depend on the choice of $S_{\G}$, nor does its extension to $\R_+$. We say that $\G$ has \textit{polynomial growth of degree }$d$ if $V_{\G}(x)\approx x^d$. Finally, note that the volume growth is increasing but the isoperimetric profile may fail to be injective.
	
	\begin{theorem}[{\cite[Theorem~3.1]{delabieQuantitativeMeasureEquivalence2022}}]\label{th2}
		Let $\varphi$ be an increasing, subadditive function such that $\varphi(0)=0$, let $\G$ and $\La$ be finitely generated groups. Assume that there exists a $(\varphi,\ld^0)$-integrable measure equivalence coupling from $\G$ to $\La$. Then
		$$V_{\G}\preccurlyeq V_{\La}\circ\varphi^{-1},$$
		where $\varphi^{-1}$ denotes the inverse function of $\varphi$.
	\end{theorem}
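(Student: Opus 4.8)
The plan is to deduce the growth inequality from a concentration estimate on the cocycle $\cogl$ over balls of $\G$, obtained by combining the cocycle identity with the subadditivity of $\varphi$, and then to convert that estimate into a counting bound via Markov's inequality. Throughout I write $B_{\G}(n)=\{\g\in\G:\left|\g\right|_{S_{\G}}\le n\}$, so that $V_{\G}(n)=|B_{\G}(n)|$, I let $\mu$ denote the probability measure on $\XL$, and I recall that $\G$ acts on $(\XL,\mu)$ in a measure-preserving way with cocycle $\cogl$. Since the coupling is $(\varphi,\ld^0)$-integrable, the constant
\[
M:=\max_{s\in S_{\G}}\int_{\XL}\varphi\!\left(\left|\cogl(s,x)\right|_{S_{\La}}\right)d\mu(x)
\]
is finite. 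Writing $\g=s_1\cdots s_k$ as a geodesic word with $k=\left|\g\right|_{S_{\G}}\le n$, the cocycle identity expresses $\cogl(\g,x)$ as a product $\cogl(s_1,x_1)\cdots\cogl(s_k,x_k)$, where each $x_i$ is the image of $x$ under a measure-preserving element of the $\G$-action. The triangle inequality for $\left|\cdot\right|_{S_{\La}}$, together with the monotonicity and subadditivity of $\varphi$, then gives $\varphi(\left|\cogl(\g,x)\right|_{S_{\La}})\le\sum_{i=1}^{k}\varphi(\left|\cogl(s_i,x_i)\right|_{S_{\La}})$, and integrating while using the invariance of $\mu$ yields, for every $\g\in B_{\G}(n)$,
\[
\int_{\XL}\varphi\!\left(\left|\cogl(\g,x)\right|_{S_{\La}}\right)d\mu(x)\ \le\ nM.
\]
The multiplicative constant hidden in the definition of $\varphi$-integrability is harmless here, as subadditivity gives $\varphi(cx)\le\lceil c\rceil\varphi(x)$.

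Next I would localize using a threshold $R>0$. Markov's inequality applied to the increasing function $\varphi$ gives $\mu(\{x:\left|\cogl(\g,x)\right|_{S_{\La}}>R\})\le nM/\varphi(R)$ for each $\g\in B_{\G}(n)$, whence, summing over $\g$,
\[
\int_{\XL}\left|\{\g\in B_{\G}(n):\left|\cogl(\g,x)\right|_{S_{\La}}\le R\}\right|d\mu(x)\ \ge\ V_{\G}(n)\left(1-\frac{nM}{\varphi(R)}\right).
\]
The counting step is to bound the integrand from above by $V_{\La}(R)$. In the orbit-equivalence picture this is immediate: for fixed $x$ the values $\cogl(\g,x)$ with $\left|\cogl(\g,x)\right|_{S_{\La}}\le R$ lie in the ball of radius $R$ of $\La$, and $\g\mapsto\cogl(\g,x)$ is injective (if $\cogl(\g,x)=\cogl(\g',x)$ then $\g x=\g'x$, so $\g=\g'$ by freeness), so there are at most $V_{\La}(R)$ of them. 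Choosing $R:=\varphi^{-1}(2Mn)$, which is legitimate because $\varphi$ is increasing with $\varphi(0)=0$, makes the factor $1-nM/\varphi(R)$ at least $\tfrac12$, and combining the two displays gives $\tfrac12 V_{\G}(n)\le V_{\La}(\varphi^{-1}(2Mn))$. Thus $V_{\G}(n)=O\big((V_{\La}\circ\varphi^{-1})(2Mn)\big)$, which is exactly $V_{\G}\preccurlyeq V_{\La}\circ\varphi^{-1}$ with constant $C=2M$.

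The hard part will be the counting step for a genuine measure equivalence coupling, where it must be performed inside the coupling $\Omega$ rather than on a single space. There the induced $\G$-action on the $\La$-fundamental domain $\XL$ need not make $\g\mapsto\cogl(\g,x)$ injective: two elements $\g\neq\g'$ can share a cocycle value while their base points $\g\cdot x$ and $\g'\cdot x$ differ, so the clean per-$x$ bound by $V_{\La}(R)$ is no longer available. I would handle this in one of two ways. Either reduce the coupling to a (stable) orbit equivalence and control the effect of the reduction on $\varphi$-integrability up to multiplicative constants, which are absorbed both by subadditivity of $\varphi$ and by the relation $\preccurlyeq$. Or replace the pointwise count by an integration over $\Omega$: the points $\g x$ with $\g\in B_{\G}(n)$ that land in $B_{\La}(R)\cdot\XL$ are counted against the finite measures of the two fundamental domains, with the orbit multiplicities kept under control through the fundamental-domain identity $\sum_{\g\in\G}m(\g X_{\G}\cap\XL)=m(\XL)$ coming from the partition of $\Omega$ by the $\G$-translates of $X_{\G}$. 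In either approach the monotonicity, subadditivity, and vanishing at $0$ of $\varphi$ are precisely the hypotheses that keep the cocycle estimate, the Markov step, and the inversion $\varphi^{-1}$ all valid.
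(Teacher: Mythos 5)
You are proving a statement that this paper only quotes: Theorem~\ref{th2} is imported from Delabie--Koivisto--Le Ma\^itre--Tessera with no internal proof, so the comparison can only be with the source's counting argument, which your plan resembles in outline. Judged on its own terms, your proposal is sound up to exactly the point you flag yourself. The cocycle estimate $\int_{\XL}\varphi(|\cogl(\g,x)|_{S_{\La}})\,d\mu(x)\le nM$ for $\g\in B_{\G}(n)$ is correct (cocycle identity, invariance of $\mu_{\XL}$ under the induced action, monotonicity and subadditivity of $\varphi$, and $\varphi(cx)\le\lceil c\rceil\varphi(x)$ to dispose of the constants $c_\g$), the Markov step is correct, and your injectivity argument does settle the orbit equivalence case. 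But the theorem concerns measure equivalence couplings, where $\g\mapsto\cogl(\g,x)$ is genuinely non-injective, and neither fallback is carried out, so the proof is incomplete precisely where the statement exceeds its OE special case. Moreover, route (a) is dubious as described: an ME coupling yields only a \emph{stable} orbit equivalence after restriction, and the cocycles of an induced or restricted action are governed by return-time sums, not by a fixed multiplicative constant that subadditivity could absorb; the behaviour of $\varphi$-integrability under such reductions is itself delicate and is exactly what one wants to avoid invoking.

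Route (b), however, is one idea away from a complete proof, and the missing idea is a truncation, not a pointwise multiplicity bound. Normalize $\mu(\XL)=1$. For $x\in\XL$, the condition $|\cogl(\g,x)|_{S_{\La}}\le R$ means $\g\ast x\in S\coloneq B_{\La}(R)\ast\XL$ (the actions commute and $B_{\La}(R)$ is symmetric), and $\mu(S)\le V_{\La}(R)$. Hence
$$\int_{\XL}\left|\{\g\in B_{\G}(n):|\cogl(\g,x)|_{S_{\La}}\le R\}\right|d\mu(x)=\sum_{\g\in B_{\G}(n)}\mu(\g\ast\XL\cap S).$$
Now decompose $\XL=\bigsqcup_{\g'\in\G}\g'\ast F_{\g'}$ with $F_{\g'}\coloneq(\g'^{-1}\ast\XL)\cap\XG\subset\XG$; the fundamental-domain identity you quote says $\sum_{\g'}\mu(F_{\g'})=\mu(\XL)<\infty$, so one can choose a \emph{finite} $Q\subset\G$ with $\sum_{\g'\notin Q}\mu(F_{\g'})\le\tfrac14$. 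For each fixed $\g'$, the sets $\g\g'\ast F_{\g'}$ with $\g\in B_{\G}(n)$ are pairwise disjoint, being distinct $\G$-translates of a subset of the $\G$-fundamental domain $\XG$; hence the terms with $\g'\in Q$ contribute at most $|Q|\,\mu(S)\le|Q|\,V_{\La}(R)$, while the tail $\g'\notin Q$ contributes at most $V_{\G}(n)\cdot\tfrac14$. Taking $R$ with $\varphi(R)\ge 4Mn$ in your Markov lower bound gives $V_{\G}(n)\left(1-\tfrac14-\tfrac14\right)\le|Q|\,V_{\La}(R)$, i.e.\ $V_{\G}(n)\le 2|Q|\,V_{\La}(\varphi^{-1}(4Mn))$, which is $V_{\G}\preccurlyeq V_{\La}\circ\varphi^{-1}$. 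In short: the gap is real and is exactly the unexecuted counting step for genuine couplings; sketch (a) should be dropped, and sketch (b) becomes a proof once the finite truncation $Q$ replaces any attempt to bound the orbit multiplicity $|\G\ast x\cap\XL|$ pointwise (it is only integrable, not bounded).
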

	
	With the same strategy as Theorem~\ref{thA}, we get the following statement.
	
	\begin{theoremletter}\label{thB}
		Let $\G$ and $\La$ be finitely generated groups. Assume that there exist two increasing functions $\fg$ and $\fl$ satisfying $\fg\approx V_{\G}$, $\fl\approx V_{\La}$ and the following properties as $x\to +\infty$:
		\begin{equation}\label{hyp4}
			\fg^{-1}(x)=o\left (\fl^{-1}(x)\right ),
		\end{equation}
		\begin{equation}\label{hyp5}
			\forall C>0,\ \fg^{-1}(Cx)=O\left (\fg^{-1}(x)\right ),
		\end{equation}
		\begin{equation}\label{hyp6}
			\forall C>0,\ \fg^{-1}\circ\fl(Cx)=O\left (\fg^{-1}\circ\fl(x)\right ).
		\end{equation}
		Then there is no $(\fg^{-1}\circ\fl,\ld^0)$-integrable measure equivalence coupling from $\G$ to $\La$.
	\end{theoremletter}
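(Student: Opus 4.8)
The plan is to argue by contradiction, reproducing the strategy behind Theorem~\ref{thA} but feeding in the volume-growth estimate of Theorem~\ref{th2} in place of the isoperimetric estimate of Theorem~\ref{th1}. As a sanity check, the sought conclusion specializes to the case $\G=\Z^{k+\ell}$, $\La=\Z^k$ of Theorem~\ref{corB}: there $\fg(x)=x^{k+\ell}$ and $\fl(x)=x^k$, so that $\varphi\coloneq\fg^{-1}\circ\fl$ is exactly $x\mapsto x^{k/(k+\ell)}$. So I would set $\varphi\coloneq\fg^{-1}\circ\fl$ and suppose, towards a contradiction, that there is a $(\varphi,\ld^0)$-integrable measure equivalence coupling from $\G$ to $\La$, i.e. that the cocycle $\cogl$ is $\varphi$-integrable.

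First I would record that $\varphi$ is sublinear. Writing $y=\fl(x)$ and using that $\fl\approx V_\La$ is increasing and tends to $+\infty$, Assumption~\eqref{hyp4} gives $\varphi(x)/x=\fg^{-1}(y)/\fl^{-1}(y)\to 0$, that is $\varphi(x)=o(x)$. Sublinearity is precisely the hypothesis under which the key lemma applies (the cocycle analogue of the elementary Fact above, the general-group version of Lemma~\ref{lemma2}): from the $\varphi$-integrability of $\cogl$ it manufactures an increasing, subadditive function $\psi$ with $\psi(0)=0$ such that the same coupling is $(\psi,\ld^0)$-integrable while $\psi(x)=O(\varphi(x))$ does \emph{not} hold as $x\to+\infty$.

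Next I would feed $\psi$ into Theorem~\ref{th2}, whose hypotheses are now met, to get $V_\G\preccurlyeq V_\La\circ\psi^{-1}$, hence $\fg\preccurlyeq\fl\circ\psi^{-1}$ after replacing $V_\G,V_\La$ by the asymptotically equivalent $\fg,\fl$. Unwinding $\preccurlyeq$ and $\approx$ produces constants $K,C',C_3>0$ with $\fg(x)\le K\,\fl\bigl(C_3\,\psi^{-1}(C'x)\bigr)$ for large $x$. Substituting $u=C_3\,\psi^{-1}(C'x)$ and applying the increasing map $\fg^{-1}$ gives $\psi(u/C_3)\le C'\,\fg^{-1}\bigl(K\fl(u)\bigr)$. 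Assumption~\eqref{hyp5} absorbs the factor $K$, yielding $\psi(u/C_3)=O(\fg^{-1}(\fl(u)))=O(\varphi(u))$; and Assumption~\eqref{hyp6}, namely $\varphi(C_3\,\cdot)=O(\varphi)$, then rewrites this (with $v=u/C_3$) as $\psi(v)=O(\varphi(C_3v))=O(\varphi(v))$. Thus $\psi=O(\varphi)$, contradicting the key lemma and proving the theorem.

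The hard part is not the skeleton above but the bookkeeping of multiplicative constants, which is exactly what Assumptions~\eqref{hyp5} and~\eqref{hyp6} are tailored to neutralize: the constant $K$ inside $\fg^{-1}$ coming from $V_\G\approx\fg$, the constant $C_3$ inside $\fl$ coming from $V_\La\approx\fl$, and—already at the outset—the multiplicative constant hidden in the definition of $(\varphi,\ld^0)$-integrability (Definition~\ref{quantitative}), which \eqref{hyp6} lets me discard by identifying $\varphi$-integrability with and without that constant. A secondary technical point I would need to verify is that the $\psi$ delivered by the key lemma genuinely satisfies the regularity demanded by Theorem~\ref{th2} (increasing, subadditive, vanishing at $0$); this should be built into the construction adapting the Fact, where the sublinearity of $\varphi$ guarantees that the slowly growing multiplier producing $\psi$ from $\varphi$ does not spoil subadditivity.
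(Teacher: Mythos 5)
Your proof is correct and takes essentially the same approach as the paper, whose entire proof of Theorem~\ref{thB} is to rerun the proof of Theorem~\ref{thA} with Lemma~\ref{lemma2} supplying an increasing, subadditive $\psi$ vanishing at $0$ and Theorem~\ref{th2} supplying the contradiction --- exactly your skeleton. Your bookkeeping of constants (Assumption~\eqref{hyp6} to normalize the integrability constants and absorb $C_3$, Assumption~\eqref{hyp5} to absorb $K$, Assumption~\eqref{hyp4} for sublinearity of $\fg^{-1}\circ\fl$) matches the paper's intended use of the hypotheses, differing only in the trivial variant that the paper absorbs the constant inside $\psi$ via its subadditivity bound $\psi(cx)\leq\lceil c\rceil\psi(x)$.
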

	
	We will prove Theorems~\ref{thA} and~\ref{thB} in Section~\ref{secproof} and give the main applications in Section~\ref{secapp}.\newline
	
	More general statements of Delabie, Koivisto, Le Maître and Tessera deal with asymmetric weakenings of measure equivalence coupling: measure subgroup, quotient and sub-quotient couplings. We can still apply our ideas to these generalizations.\par
	Theorems~\ref{thA} and~\ref{thB} still hold in the context of quantitative orbit equivalence, since the existence of a $(\varphi,\psi)$-integrable orbit equivalence from $\G$ to $\La$ is equivalent to the existence of a $(\varphi,\psi)$-integrable measure equivalence coupling with equal fundamental domains.
	
	\paragraph{Acknowledgements.}
	
	I am very grateful to my advisors François Le Maître and Romain Tessera for their support and valuable advice. I also wish to thank Amandine Escalier and Matthieu Joseph for fruitful discussions about the construction of measure equivalence couplings between $\Z^2$ and $\Z$. Finally, I thank Vincent Dumoncel and Fabien Hoareau for their useful comments on the paper.
	
	\section{Quantitative measure equivalence}\label{secqme}
	
	The groups $\G$ and $\La$ are always assumed to be finitely generated. By a \textit{smooth} action of a countable group $\G$, we mean a measure-preserving $\G$-action on a standard measured space $\espom$ which admits a fundamental domain, namely a Borel subset $\XG$ of $\Omega$ that intersects every $\G$-orbit exactly once.
	
	\begin{definition}\label{defme}
		A \textbf{measure equivalence coupling} between $\G$ and $\La$ is a quadruple $(\Omega,\XG,\XL,\mu)$ where $\espom$ is a standard Borel measure space equipped with commuting measure-preserving smooth $\G$- and $\La$-actions such that
		\begin{enumerate}
			\item both the $\G$- and $\La$-actions are free;
			\item $\XG$ (resp. $\XL$) is a fixed fundamental domain for the $\G$-action (resp. for the $\La$-action);
			\item $\XG$ and $\XL$ have finite measures.
		\end{enumerate}
		We will always use the notations $\gamma\ast x$ and $\lambda\ast x$ (with $\g\in\G$, $\la\in\La$, $x\in\Omega$) for these smooth actions on $\Omega$. The notations $\g\cdot x$ and $\la\cdot x$ refers to the induced actions that we now define, as well as the cocycles.
	\end{definition}
	
	\begin{definition}
		A measure equivalence coupling $(\Omega,\XG,\XL,\mu)$ between $\G$ and $\La$ induces a finite measure-preserving $\G$-action on $(\XL,\mu_{\XL})$ in the following way: for every $\g\in\G$ and every $x\in\XL$, $\g\cdot x\in\XL$ is defined by the identity
		$$(\La\ast\g\ast x)\cap\XL=\{\g\cdot x\},$$
		it is unique since $\XL$ is a fundamental domain for the smooth $\La$-action.\par
		This also yields a \textbf{cocycle} $\cogldef$ uniquely (by freeness) defined by
		$$\cogl(\g,x)\ast\g\ast x=\gamma\cdot x,$$
		or equivalently $\cogl(\g,x)\ast\g\ast x\in\XL$, for almost every $x\in\XL$ and every $\g\in\G$. We similarly define a finite measure-preserving $\La$-action on $(\XG,\mu_{\XG})$ and the associated cocycle $\colgdef$.
	\end{definition}
	
	\begin{remark}
		The cocycle $\cogldef$ satisfies the cocycle identity
		$$\forall\g_1,\g_2\in\G,\ \forall x\in\XL,\ \cogl(\g_1\g_2,x)=\cogl(\g_1,\g_2\cdot x)\cogl(\g_2,x).$$
	\end{remark}
	
	\begin{definition}[Delabie, Koivisto, Le Maître and Tessera~\cite{delabieQuantitativeMeasureEquivalence2022}]\label{quantitative}
		Let $\varphi\colon\R_+\to\R_+$ be a non-decreasing map. Given a measure equivalence coupling between $\G$ and $\La$, we say that the cocycle $\cogldef$ is $\varphi$\textbf{-integrable} if for every $\g\in\G$, there exists $c_{\g}>0$ such that
		$$\int_{\XL}{\varphi\left (\frac{{|\cogl(\g,x)|}_{S_{\La}}}{c_{\g}}\right )\mathrm{d}\mu_{\XL}(x)}<+\infty$$
		where $S_{\La}$ is a finite generating set of $\La$ and for every $\la$, ${|\la|_{S_{\La}}}$ denotes its \textbf{word-length metric} with respect to $S_{\La}$, defined by
		$${|\la|_{S_{\La}}}\coloneq \min\{n\geq 0\mid\exists\la_1,\ldots,\la_n\in S_{\La}\cup (S_{\La})^{-1}\cup\{e_{\Lambda}\},\la=\la_1\ldots\la_n\}.$$
		We define $\varphi$-integrability for $\colg$ in a similar way.
	\end{definition}
	
	\begin{remark}
		Defining $\varphi$-integrability for the cocycle $\cogl$ with the use of constants $c_{\gamma}$ is necessary because we need the following properties:
		\begin{itemize}
			\item this notion of $\varphi$-integrability does not depend on the choice of the finite generating set of $\La$, since for any finitely generated sets $S_{\La},S'_{\La}$, there exists a constant $C>0$ such that
			$$\frac{1}{C}{\left |\la\right |}_{S'_{\La}}\leq {\left |\la\right |}_{S_{\La}}\leq C{\left |\la\right |}_{S'_{\La}}$$
			for every $\la\in\La$;
			\item if $\varphi\approx\psi$, then $\varphi$-integrability and $\psi$-integrability are equivalent notions;
			\item to prove that the cocycle $\cogldef$ is $\varphi$-integrable, it suffices to check the finiteness of
			$$\int_{\XL}{\varphi\left (\frac{{\left |\cogl(\g,x)\right |}_{S_{\La}}}{c_{\g}}\right )\mathrm{d}\mu_{\XL}(x)}$$
			for every element $\g$ in a finite generating set of $\G$. This follows from~\cite[Proposition~2.22]{delabieQuantitativeMeasureEquivalence2022}.
		\end{itemize}
	\end{remark}
	
	\begin{definition}[Delabie, Koivisto, Le Maître and Tessera~\cite{delabieQuantitativeMeasureEquivalence2022}]
		A measure equivalence coupling $(\Omega,\XG,\XL,\mu)$ between the groups $\G$ and $\La$ is a $(\varphi,\psi)$\textbf{-integrable measure equivalence coupling} from $\G$ to $\La$ if $\cogldef$ is $\varphi$-integrable and $\colgdef$ is $\psi$-integrable.\par
		For $p>0$, we write $\ld^p$ instead of $\varphi$ or $\psi$ if we consider the map $t\mapsto t^p$, and we write $\ld^0$ when no requirement is made on the cocycle. For example, the measure equivalence coupling is $(\varphi,\ld^p$)-integrable if $\cogl$ is $\varphi$-integrable and $\colg$ is in $\ld^p(\XL,\mu_{\XL})$; it is $(\ld^p,\ld^0$)-integrable if $\cogl$ is $\ld^p\esp$. Finally, a measure equivalence coupling is $\varphi$\textbf{-integrable} if it is $(\varphi,\varphi)$-integrable.
	\end{definition}
	
	Note that a $(\varphi,\psi)$-integrable measure equivalence coupling from $\G$ to $\La$ is a $(\psi,\varphi)$-integrable measure equivalence coupling from $\La$ to $\G$.
	
	\section{Proof of the main results}\label{secproof}
	
	We now prove Theorems~\ref{thA} and~\ref{thB}. The key result is Lemma~\ref{lemma2}, which uses Lemma~\ref{lemma1}.
	
	\begin{lemma}\label{lemma1}
		Let $x\in\R$ and $\theta\colon [x,+\infty )\to\R$ be a continuous sublinear function. If $y$ is a real number satisfying $y<\theta(t)$ for every $t\in [x,+\infty)$, then the set
		$$E(x,y,\theta)\coloneq \left\{t> x\mid\forall s\in [x,t], \theta(s)\geq\frac{\theta(t)-y}{t-x}(s-x)+y\right\}$$
		is not bounded above.
	\end{lemma}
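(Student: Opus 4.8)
The plan is to reinterpret membership in $E(x,y,\theta)$ as a condition on the slopes of the chords issued from the fixed point $(x,y)$, and then, beyond any prescribed abscissa, to exhibit a point realizing a running minimum of these slopes. For $t>x$ set
$$m(t)\coloneq \frac{\theta(t)-y}{t-x},$$
the slope of the segment joining $(x,y)$ to $(t,\theta(t))$. Unravelling the definition of $E(x,y,\theta)$, the endpoint $s=x$ is just the standing hypothesis $\theta(x)\ge y$ and the endpoint $s=t$ gives an equality, while for $s\in(x,t)$ the inequality $\theta(s)\ge m(t)(s-x)+y$ becomes, after dividing by $s-x>0$, exactly $m(s)\ge m(t)$. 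Hence
$$t\in E(x,y,\theta)\iff m(t)\le m(s)\ \text{for all }s\in(x,t],$$
so that $E(x,y,\theta)$ is precisely the set of points where $m$ attains its infimum over $(x,t]$. The task thus reduces to showing that $m$ admits such record minima arbitrarily far to the right.

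I would then record the two features of $m$ that drive the argument. As $\theta(t)>y$ for every $t$, the function $m$ is continuous and strictly positive on $(x,+\infty)$; as $\theta$ is sublinear, $\theta(t)=o(t)$, whence $m(t)\to 0$ when $t\to+\infty$. Moreover $m(s)\to+\infty$ as $s\to x^+$, since the numerator tends to $\theta(x)-y>0$ while the denominator tends to $0^+$.

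Given now an arbitrary threshold $T_0>x$, I would produce a record minimum past it. Put $a\coloneq \inf_{s\in(x,T_0]}m(s)$; because $m$ blows up near $x$, this infimum coincides with the infimum over a compact subinterval, hence is attained and satisfies $a>0$. Since $m(t)\to 0<a$, there is $T_1>T_0$ with $m(T_1)<a$. On the compact interval $[T_0,T_1]$ the continuous function $m$ attains its minimum at some $t^\ast$, and from $m(T_0)\ge a>m(T_1)$ we get $t^\ast>T_0$ together with $m(t^\ast)\le m(T_1)<a$. Consequently $m(t^\ast)=\min\bigl(a,\ \min_{[T_0,T_1]}m\bigr)=\inf_{(x,T_1]}m$, so $m(t^\ast)\le m(s)$ for every $s\in(x,t^\ast]$, i.e.\ $t^\ast\in E(x,y,\theta)$ with $t^\ast>T_0$. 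As $T_0$ is arbitrary, $E(x,y,\theta)$ is unbounded above.

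The one genuinely delicate point—and the step I would write out most carefully—is the behaviour of $m$ at the left endpoint: the interval $(x,T_0]$ is only half-open and $m$ diverges there, so one must justify that $a$ is an honest positive minimum rather than a mere limiting value. This is supplied by $m(s)\to+\infty$ as $s\to x^+$, after which the whole argument rests on elementary compactness and attainment. Sublinearity enters exactly once, to force $m(t)\to 0$ and thereby guarantee the existence of $T_1$.
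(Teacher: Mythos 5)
Your proof is correct and takes essentially the same approach as the paper: both recast membership in $E(x,y,\theta)$ as the chord-slope function $t\mapsto\frac{\theta(t)-y}{t-x}$ attaining its running minimum over $(x,t]$, then use its positivity together with the decay to $0$ forced by sublinearity to produce such record points arbitrarily far to the right. The difference is only presentational — the paper packages this via an auxiliary running-minimum function $m$ and its set of strict record points $E'$, whereas you extract each record point directly by compactness, and in doing so you are in fact slightly more explicit than the paper about why the infimum over the half-open interval $(x,T_0]$ is attained (the blow-up of the slope as $s\to x^+$).
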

	
	\begin{proof}[Proof of Lemma~\ref{lemma1}]
		Let us consider the continuous maps $a\ \colon t\in (x,+\infty)\mapsto\R$ and $m\ \colon t\in (x,+\infty)\mapsto\R$ defined by
		$$a(t)=\frac{\theta(t)-y}{t-x}\ \text{and}\ m(t)=\min_{s\in (x,t]}{a(s)}.$$
		Note that the set $E(x,y,\theta)$ is equal to $\{t>x\mid m(t)=a(t)\}$. Let us also define the set
		$$E'\coloneq \{t\in (x,+\infty)\mid\forall s\in (x,t),\ m(s)>m(t)\}.$$
		By the assumptions, the non-increasing map $m$ satisfies the following properties:
		\begin{itemize}
			\item $m(t)>0$ for every $t\in (x,+\infty)$;
			\item $m(t)\underset{t\to +\infty}{\longrightarrow}0$;
			\item if $t$ is in $E'$, then we have $m(t)=a(t)$.
		\end{itemize}
		Therefore the set $E'$ is not bounded above and is included in $E(x,y,\theta)$.
	\end{proof}
	
	\begin{lemma}\label{lemma2}
		Let $\varphi\colon\R_+\to\R_+$ be a continuous, sublinear and increasing function. Given an integer $\ell\geq 1$ and a probability space $\esp$, let $f_1,\ldots,f_{\ell}\colon X\to\N$ be measurable maps satisfying
		$$\int_{X}{\varphi(f_i(x))\mathrm{d}\mu(x)}<+\infty$$
		for every $i\in\{1,\ldots,\ell\}$. Then there exists a subadditive map $\psi\colon\R_+\to\R_+$ such that $\psi(0)=0$, $\psi$ and $t\mapsto t/\psi(t)$ are non-decreasing, and
		\begin{enumerate}
			\item $\varphi(x_k)=o(\psi(x_k))$ for some increasing sequence $(x_k)_{k\geq 0}$ of non-negative real numbers tending to $+\infty$;
			\item for every $i\in\{1,\ldots,\ell\}$,
			$$\int_{X}{\psi(f_i(x))\mathrm{d}\mu(x)}<+\infty.$$
		\end{enumerate}
	\end{lemma}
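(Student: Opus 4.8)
The plan is to treat the integrability hypotheses as summability conditions, to speed them up via the elementary fact stated in the introduction, and then to regularize the resulting majorant into a subadditive function using Lemma~\ref{lemma1}.

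First, since each $f_i$ takes values in $\N$, I would write $\int_X \varphi(f_i)\,\mathrm{d}\mu = \sum_{n\ge 0}\varphi(n)\mu(\{f_i=n\})$ and set $u(n):=\sum_{i=1}^{\ell} \varphi(n)\mu(\{f_i=n\})$, a summable sequence of non-negative reals. Applying a slight refinement of the elementary fact to $(u(n))_n$ produces a \emph{non-decreasing} step function $\rho\colon\R_+\to[1,+\infty)$ with $\rho(n)\to+\infty$ and $\sum_n \rho(n)u(n)<+\infty$; the thresholds $N_k$ in that construction may be taken as large as we wish, so I would simultaneously arrange $\rho(t)\varphi(t)/t\to 0$, using sublinearity of $\varphi$. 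Smoothing $\rho$ to a continuous non-decreasing function, I define the auxiliary map $\widetilde\psi(t):=\rho(t)\varphi(t)$. It is continuous, increasing, sublinear, satisfies $\int_X\widetilde\psi(f_i)\,\mathrm{d}\mu\le\sum_n\rho(n)u(n)<+\infty$ for every $i$, and obeys $\varphi(t)/\widetilde\psi(t)=1/\rho(t)\to 0$.

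The map $\widetilde\psi$ already dominates $\varphi$ and is integrable against the $f_i$, but it need not be subadditive nor have $t\mapsto t/\widetilde\psi(t)$ non-decreasing. To fix this I would pass to the largest minorant whose ratio to $t$ is non-increasing, namely $\psi(t):=t\,\inf_{0<s\le t}\widetilde\psi(s)/s$. In the notation of Lemma~\ref{lemma1} applied to $\theta=\widetilde\psi$ (with $x=0$ and $y=0$, restricting to the region where $\varphi>0$ so that $\theta$ stays positive), this is precisely $\psi(t)=m(t)\,t$, and the lemma guarantees that the contact set $E=\{t: m(t)=a(t)\}$, on which $\psi=\widetilde\psi$, is unbounded above. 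By construction $\psi(t)/t=m(t)$ is non-increasing, so $t/\psi(t)$ is non-decreasing and, together with $\psi(0)=0$, this forces subadditivity: for $s,t>0$ one bounds $\psi(s+t)=s\,\tfrac{\psi(s+t)}{s+t}+t\,\tfrac{\psi(s+t)}{s+t}\le\psi(s)+\psi(t)$. The function $\psi$ is non-decreasing because it is affine with positive slope on the intervals where $m$ is constant and coincides with the increasing $\widetilde\psi$ on $E$.

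It then remains to check the two numbered conclusions. Integrability is immediate from $\psi\le\widetilde\psi$ on the region where the construction applies, the complementary bounded region $\{f_i\le x\}$ contributing at most $\psi(x)\mu(X)<+\infty$; hence $\int_X\psi(f_i)\,\mathrm{d}\mu<+\infty$ for every $i$. For the first conclusion I would select an increasing sequence $(x_k)$ in $E$ tending to $+\infty$: there $\psi(x_k)=\widetilde\psi(x_k)=\rho(x_k)\varphi(x_k)$, so $\varphi(x_k)/\psi(x_k)=1/\rho(x_k)\to 0$, that is $\varphi(x_k)=o(\psi(x_k))$. The step I expect to be most delicate is the calibration of $\rho$: it must grow fast enough that $\sum_n\rho(n)u(n)$ still converges, yet slowly enough that $\widetilde\psi$ remains sublinear, which is exactly the hypothesis allowing Lemma~\ref{lemma1} to be invoked and the contact set $E$ to be unbounded. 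Reconciling these two demands, together with the minor positivity and boundary issues near $t=0$, is where the argument requires the most care.
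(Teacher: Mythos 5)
Your construction is at heart a reorganized version of the paper's: both combine the speed-up trick from the elementary fact with Lemma~\ref{lemma1}. The paper builds $\psi$ inductively as a concave piecewise-linear function, applying Lemma~\ref{lemma1} at each stage to $\theta=k\varphi$ anchored at $(x_{k-1},\psi(x_{k-1}))$, where $\psi(x_{k-1})=(k-1)\varphi(x_{k-1})<\theta(x_{k-1})$, so the hypothesis $y<\theta(t)$ for all $t\in[x,+\infty)$ is satisfied and the origin never enters the picture; you instead multiply once by a slowly growing $\rho$ and regularize globally. The calibration you flag as the delicate point is in fact unproblematic: choose $N_k$ so large that both $\sum_{n\ge N_k}u(n)\le k^{-3}$ and $\varphi(t)/t\le k^{-2}$ for $t\ge N_k$; then $\sum_n\rho(n)u(n)<+\infty$ and $\rho(t)\varphi(t)/t\le 1/k\to0$ hold simultaneously, so $\widetilde\psi$ is sublinear as required.

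The genuine gap is at the origin. Lemma~\ref{lemma1} with $x=y=0$ requires $y<\theta(t)$ for \emph{every} $t\in[0,+\infty)$, in particular $\widetilde\psi(0)>0$, and nothing in the hypotheses gives $\varphi(0)>0$. Worse, your formula $\psi(t)=t\,\inf_{0<s\le t}\widetilde\psi(s)/s$ can collapse identically: take $\varphi(t)=t^2$ on $[0,1]$ and $\varphi(t)=\sqrt{t}$ on $[1,+\infty)$, which is continuous, increasing and sublinear; since $\rho$ equals $1$ on the first block, $\widetilde\psi(s)/s=\rho(s)\varphi(s)/s\to0$ as $s\to0^+$, hence $m\equiv0$ and $\psi\equiv0$, so both conclusions fail. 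Your parenthetical remedy, ``restricting to the region where $\varphi>0$'', is vacuous: $\varphi$ is increasing, so $\varphi>0$ on all of $(0,+\infty)$ and the restriction changes nothing --- the obstruction is not positivity of $\theta$ but a positive lower bound on $\theta(s)/s$ as $s\to0^+$. The gap is repairable within your scheme: fix $s_0>0$, set $m(t)=\min_{s_0\le s\le t}\widetilde\psi(s)/s$ for $t\ge s_0$ and extend $\psi(t)=t\,m(t)$ linearly below $s_0$ by $\psi(t)=t\,m(s_0)$; equivalently, first replace $\widetilde\psi(t)$ by $\max\bigl(\widetilde\psi(t),\min(t,\widetilde\psi(1))\bigr)$, which leaves its values at integers $n\ge1$ unchanged, so integrability survives. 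Then $m$ is positive, non-increasing and tends to $0$ by sublinearity, and the unboundedness of the contact set follows from the \emph{proof} of Lemma~\ref{lemma1} (which uses exactly these three properties of $m$) but not from its statement, since rays from the origin with a cutoff do not coincide with the set $E(x,y,\theta)$ for any admissible anchor $(x,y)$; note also that anchoring Lemma~\ref{lemma1} at $x=s_0$, $y=0$ instead would produce chords from $(s_0,0)$, for which the monotonicity of $t/\psi(t)$ and the subadditivity argument would both need redoing. With this patch your monotonicity, subadditivity and integrability verifications go through, and your argument becomes a correct alternative to the paper's induction.
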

	
	\begin{proof}[Proof of Lemma~\ref{lemma2}]
		For every $n\geq 0$ and every $i\in\{1,\ldots,\ell\}$, let us define the non-negative real number $u^{(i)}_n\coloneq \varphi(n)\mu(\{f_i=n\})$. For every $i\in\{1,\ldots,\ell\}$, the sequence $(u^{(i)}_n)_{n\geq 0}$ is summable since
		$$\sum_{n=0}^{+\infty}{u^{(i)}_n}=\sum_{n=0}^{+\infty}{\varphi(n)\mu(\{f_i=n\})}=\int_X{\varphi(f_i(x))\mathrm{d}\mu(x)}<\infty.$$
		Let $(N_k)_{k\geq 1}$ be an increasing sequence of positive integers satisfying $N_1=0$ and
		$$\forall k\geq 2,\ \forall i\in\{1,\ldots,\ell\},\ \sum_{n=N_k}^{+\infty}{u^{(i)}_n}\leq \frac{1}{k^3}.$$
		Then for every integer $n\geq 1$, we define $K_n\coloneq k$ if $N_k\leq n<N_{k+1}$. The sequence $(K_n)_{n\geq 1}$ tends to $+\infty$ and the sequences $(K_nu^{(i)}_n)_{n\geq 1}$ are summable (see the proof of the fact in the introduction).\par
		We inductively build an increasing sequence $(x_k)_{k\geq 0}$ of integers satisfying $x_0=0$ and $x_k\geq N_{k+1}$ for every $k\geq 1$, a decreasing sequence $(a_k)_{k\geq 0}$ of positive real numbers, a sequence $(b_k)_{k\geq 0}$ of non-negative real numbers satisfying $b_0=0$, and a continuous piecewise linear map $\psi\colon\R_+\to\R_+$ satisfying the following properties:
		\begin{itemize}
			\item for every $k\geq 0$, for every $t\in [x_k,x_{k+1}]$, $\psi(t)=b_k+a_kt$ and $\psi(t)\leq (k+1)\varphi(t)$;
			\item for every $k\geq 0$, $\psi(x_k)=k\varphi(x_k).$
		\end{itemize}
		Let us set $x_0\coloneq 0$, $x_1\coloneq N_2$, $a_0=\varphi(N_2)/N_2$, $b_0=0$ and for every $t\in [0,N_2]$,
		$$\psi(t)\coloneq \frac{\varphi(N_2)}{N_2}t.$$
		Given an integer $k\geq 2$, assume that we have already defined $0=x_0<x_1<\ldots <x_{k-1}$, $a_0>\ldots >a_{k-2}$, $b_0,\ldots,b_{k-2}$ and the map $\psi$ on $[0,x_{k-1}]$. By the assumptions on $\varphi$ and since
		$$\psi(x_{k-1})=(k-1)\varphi(x_{k-1})<k\varphi(x_{k-1}),$$
		we can apply Lemma~\ref{lemma1} to $x\coloneq x_{k-1}$, $y\coloneq \psi(x_{k-1})$, $\theta\coloneq k\times\varphi$. We choose $x_k\in E(x_{k-1},\psi(x_{k-1}),k\times\varphi)$ sufficiently large so that
		\begin{itemize}
			\item $x_k\geq N_{k+1}$;
			\item $\displaystyle a_{k-1}\coloneq \frac{k\varphi(x_k)-\psi(x_{k-1})}{x_k-x_{k-1}}$ is less than $a_{k-2}$,
		\end{itemize}
		the last condition being possible since $\varphi$ is sublinear. Let us define
		$$b_{k-1}\coloneq \psi(x_{k-1})-\frac{k\varphi(x_k)-\psi(x_{k-1})}{x_k-x_{k-1}}x_{k-1}.$$
		We then extend $\psi$ on $[x_{k-1},x_k]$ by setting
		$$\psi(t)\coloneq b_{k-1}+a_{k-1}t=\frac{k\varphi(x_k)-\psi(x_{k-1})}{x_k-x_{k-1}}(t-x_{k-1})+\psi(x_{k-1}),$$
		so that $\psi$ satisfies $\psi(x_k)=k\varphi(x_k)$ and $\psi(t)\leq k\varphi(t)$ for every $t\in [x_{k-1},x_k]$ (by definition of the set $E(x_{k-1},\psi(x_{k-1}),k\times\varphi)$). The real number $b_{k-1}$ is necessarily non-negative since we have $b_{k-2}+a_{k-2}x_{k-1}=b_{k-1}+a_{k-1}x_{k-1}$ with $a_{k-1}<a_{k-2}$ and $b_{k-2}\geq 0$.\par
		Let us prove that $\psi$ satisfies the desired conditions. The map $\psi$ is increasing since the real numbers $a_i$ are positive. It is easy to prove that $\varphi(x_k)=o(\psi(x_k))$. Since the map $t\in (0,+\infty )\mapsto\frac{t}{at+b}$ is non-decreasing if $a>0$ and $b\geq 0$, we get that the map $t\mapsto t/\psi(t)$ is non-decreasing. We build $\psi$ as a concave and increasing map satisfying $\psi(0)=0$, so $\psi$ is subadditive. Finally, given an integer $i\in\{1,\ldots,\ell\}$, we have
		$$\begin{array}{ll}
			\displaystyle \sum_{n=x_1}^{+\infty}{\psi(n)\mu(\{f_i=n\})}&=\displaystyle\sum_{k=1}^{+\infty}{\sum_{n=x_k}^{x_{k+1}-1}{\psi(n)\mu(\{f_i=n\})}}\\
			&\leq \displaystyle \sum_{k=1}^{+\infty}{\sum_{n=x_k}^{x_{k+1}-1}{(k+1)\varphi(n)\mu(\{f_i=n\})}}\\
			&\leq \displaystyle \sum_{k=1}^{+\infty}{\sum_{n=x_k}^{x_{k+1}-1}{K_n\varphi(n)\mu(\{f_i=n\})}}\\
			&= \displaystyle \sum_{n=1}^{+\infty}{K_nu^{(i)}_n}<\infty,
		\end{array}$$
		where the second inequality follows from the inequalities $k+1\leq K_n$ for every integers $n$ and $k$ satisfying $n\geq x_k$ (since we have $x_k\geq N_{k+1}$). The equality
		$$\int_{X}{\psi(f_i(x))\mathrm{d}\mu(x)}=\displaystyle \sum_{n=0}^{x_{1}-1}{\psi(n)\mu(\{f_i=n\})}+\sum_{n=x_1}^{+\infty}{\psi(n)\mu(\{f_i=n\})}$$
		implies that the integral is finite.
	\end{proof}
	
	\begin{proof}[Proof of Theorem~\ref{thA}]
		Suppose that there exist a non-decreasing function $\fg$ and an increasing function $\fl$ satisfying $\fg\approx\jg$, $\fl\approx\jl$ and the following assumptions as $x\to +\infty$:
		\begin{equation}\label{hyp1bis}
			\fg(x)=o\left (\fl(x)\right ),
		\end{equation}
		\begin{equation}\label{hyp2bis}
			\forall C>0,\ \fg(Cx)=O\left (\fg(x)\right ),
		\end{equation}
		\begin{equation}\label{hyp3bis}
			\forall C>0,\ \fg\circ \fl^{-1}(Cx)=O\left (\fg\circ \fl^{-1}(x)\right ).
		\end{equation}
		Let us assume by contradiction that there exists a $(\fg\circ \fl^{-1},\ld^0)$-integrable measure equivalence coupling $(\Omega,\XG,\XL,\mu)$ from $\G$ to $\La$. Let us fix finite generating sets $S_{\G}$ of $\G$ and $S_{\La}$ of $\La$. We write $S_{\G}=\{\g_1,\ldots,\g_{\ell}\}$. For every $i\in\{1,\ldots,\ell\}$, there is a constant $c_{\g_i}>0$ such that
		$$\int_{\XL}{\fg\circ \fl^{-1}\left (\frac{{|\cogl(\g_i,x)|}_{S_{\La}}}{c_{\g_i}}\right )\mathrm{d}\mu_{\XL}(x)}<+\infty.$$
		Using Assumption~\eqref{hyp3bis} for $C=c_{\g_i}$, we may and do assume that $c_{\g_i}=1$ for every $i\in\{1,\ldots,\ell\}$. We now apply Lemma~\ref{lemma2} to $\varphi = \fg\circ \fl^{-1}$ ($\varphi$ is sublinear by Assumption~\eqref{hyp1bis}), $\esp = (\XL,\mu_{\XL})$ and $f_i\colon x\mapsto {|\cogl(\g_i,x)|}_{S_{\La}}$. We thus get that $(\Omega,\XG,\XL,\mu)$ is a $(\psi,\ld^0)$-integrable measure equivalence coupling from $\G$ to $\La$, for some map $\psi\colon\R_+\to\R_+$ satisfying the following properties:
		\begin{enumerate}[label=(\Alph*)]
			\item\label{propA} $\fg\circ \fl^{-1}(x_k)=o(\psi(x_k))$ for some sequence $(x_k)_{k\geq 0}$ of non-negative real numbers tending to $+\infty$;
			\item\label{propB} $\psi$ and $t\mapsto\frac{t}{\psi(t)}$ are non-decreasing;
			\item\label{propC} $\psi$ is subadditive;
		\end{enumerate}
		If we have
		\begin{equation}\label{contradiction}
			\fg\succcurlyeq\psi\circ \fl,
		\end{equation}
		namely $\psi(x)=O\left (\fg(C \fl^{-1}(x))\right )$ for some constant $C>0$, then we get a contradiction with Assumption~\eqref{hyp2bis} and Property~\ref{propA}. Now it remains to prove Inequality~\eqref{contradiction}.\par
		First, Property~\ref{propB} and Theorem~\ref{th1} imply that
		$$\jg\succcurlyeq\psi\circ \jl,$$
		which means that there exist constants $C,D>0$ such that $\psi(\jl(x))\leq D\jg(Cx)$ for every $x\geq 0$. Secondly there also exist constants $C_1, C_2, D_1, D_2>0$ such that $\fl(x)\leq D_1\jl(C_1 x)$ and $\jg(x)\leq D_2\fg(C_2 x)$ for every $x\geq 0$. Moreover, by Property~\ref{propC} and the monotonicity of $\psi$, we have $\psi(cx)\leq\lceil c\rceil\psi(x)$ for every $c>0$. Finally, this gives
		$$\begin{array}{lcl}
			\displaystyle\psi(\fl(x))&\leq&\displaystyle\psi(D_1\jl(C_1 x))\\
			&\leq&\displaystyle\lceil D_1\rceil\psi(\jl(C_1 x))\\
			&\leq&\displaystyle\lceil D_1\rceil D\jg(CC_1 x)\\
			&\leq&\displaystyle\lceil D_1\rceil DD_2\fg(CC_1 C_2 x)
		\end{array}$$
		and we get Inequality~\eqref{contradiction}.
	\end{proof}
	
	\begin{proof}[Proof of Theorem~\ref{thB}]
		This is the same proof as Theorem~\ref{thA}, except that we get a contradiction with Theorem~\ref{th2}, using the fact that Lemma~\ref{lemma2} yields a map $\psi$ which can be increasing and subadditive and satisfy $\psi(0)=0$. Moreover we similarly prove that $V_{\G}\succcurlyeq V_{\La}\circ\psi^{-1}$ implies $\fg\succcurlyeq \fl\circ\psi^{-1}$.
	\end{proof}
	
	\section{Applications}\label{secapp}
	
	\subsection{Coupling from a finitely generated group to \texorpdfstring{$\Z$}{Z}}\label{secappZ}
	
	\begin{corollary}\label{cor3}
		Let $\G$ be a finitely generated group which is not virtually cyclic. Assume that its isoperimetric profile $\jg$ satisfies
		\begin{equation}\label{hyp7}
			\forall C>0,\ \jg(Cx)=O\left (\jg(x)\right )\text{ as }x\to +\infty.
		\end{equation}
		Then there is no $(\jg,\ld^0)$-integrable measure equivalence coupling from $\G$ to $\Z$.
	\end{corollary}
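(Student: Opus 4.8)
The plan is to deduce this corollary from Theorem~\ref{thA} applied with $\La=\Z$. Since $j_{1,\Z}(x)\approx x$, I would take $\fl$ to be the identity function, which is increasing and satisfies $\fl\approx\jl$ with $\fl^{-1}=\mathrm{id}$. By Remark~\ref{isoinj} I may choose a non-decreasing function $\fg\approx\jg$. With these choices $\fg\circ\fl^{-1}=\fg$, so a $(\fg\circ\fl^{-1},\ld^0)$-integrable coupling is literally a $(\fg,\ld^0)$-integrable one, and by the $\approx$-invariance of $\varphi$-integrability this coincides with $(\jg,\ld^0)$-integrability. Checking the three hypotheses of Theorem~\ref{thA} is then almost formal: Assumption~\eqref{hyp2} is precisely the standing hypothesis~\eqref{hyp7} (preserved under $\approx$ by Remark~\ref{isoinj}), and Assumption~\eqref{hyp3} coincides with Assumption~\eqref{hyp2} because $\fl^{-1}=\mathrm{id}$. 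Thus everything reduces to verifying Assumption~\eqref{hyp1}, namely $\fg(x)=o(x)$, equivalently $\jg(x)=o(x)$.

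The heart of the matter is therefore the geometric statement that \emph{the isoperimetric profile of a finitely generated group which is not virtually cyclic is sublinear}. I would prove $\jg(x)=o(x)$ through the volume growth. It is classical that an infinite finitely generated group which is not virtually cyclic has strictly super-linear growth, in the strong sense that $V_{\G}(n)/n\to +\infty$, equivalently $V_{\G}^{-1}(x)=o(x)$: groups with $V_{\G}(n)=O(n)$ are virtually cyclic, and a non virtually cyclic group has at least quadratic growth, so $V_{\G}(n)\geq cn^2$ eventually. Combining this with the Coulhon--Saloff-Coste inequality $\jg\preccurlyeq V_{\G}^{-1}$ (see~\cite{coulhonRandomWalksGeometry2000}) yields $\jg(x)\preccurlyeq V_{\G}^{-1}(x)=o(x)$, hence $\jg(x)=o(x)$, as wanted. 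One could alternatively argue directly: if $\jg(x)\neq o(x)$, then there exist $c>0$ and finite sets $A_k$ with $|A_k|\to +\infty$ and $|\partial A_k|$ uniformly bounded; a compactness argument on $\{0,1\}^{\G}$ produces an infinite, co-infinite almost-invariant set with finite boundary, forcing $\G$ to be two-ended, i.e. virtually cyclic, a contradiction.

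With $\jg(x)=o(x)$ established, all hypotheses of Theorem~\ref{thA} hold for the pair $(\fg,\mathrm{id})$, and the theorem gives that there is no $(\fg,\ld^0)$-integrable, hence no $(\jg,\ld^0)$-integrable, measure equivalence coupling from $\G$ to $\Z$. The only genuinely non-formal input is the sublinearity $\jg(x)=o(x)$: this is where the hypothesis "not virtually cyclic" enters, and it is the main obstacle, being the single geometric fact that distinguishes $\Z$ itself (for which $\jg(x)\approx x$ and no contradiction arises) from every admissible target $\G$. Everything concerning measure equivalence is already packaged inside Theorem~\ref{thA}.
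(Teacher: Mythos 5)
Your proposal is correct and follows essentially the same route as the paper: the paper's proof likewise reduces to Theorem~\ref{thA} via Remark~\ref{isoinj} (with $\fl=\mathrm{id}$ implicit, since $j_{1,\Z}(x)\approx x$), and isolates the sublinearity $\jg(x)=o(x)$ as the only geometric input, obtained exactly as you obtain it, from the Coulhon--Saloff-Coste isoperimetric inequality combined with the fact that a non virtually cyclic group has at least quadratic growth. Your verification of Assumptions~\eqref{hyp1}, \eqref{hyp2} and~\eqref{hyp3} simply spells out details the paper leaves to the reader.
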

	
	\begin{proof}[Proof of Corollary~\ref{cor3}]
		A group $\Gamma$ is not virtually cyclic if and only if $\jg(x)=o(x)$. This is a consequence of the Coulhon Saloff-Coste isoperimetric inequality~\cite[Theorem~1]{coulhonIsoperimetriePourGroupes1993} and the fact that the volume growth of such a group is at least quadratic if it is not virtually cyclic (see e.g.~\cite[Corollary~3.5]{mannHowGroupsGrow2011a}). We then apply Theorem~\ref{thA} and Remark~\ref{isoinj} to get Corollary~\ref{cor3}.
	\end{proof}
	
	In~\cite[Theorem~1.1]{brieusselSpeedRandomWalks2021} Brieussel and Zheng prove that for any non-decreasing function $f\colon\R_+\to\R_+$ such that $x\mapsto x/f(x)$ is non-decreasing, there exists a group $\G$ such that $\jg\approx \frac{\log}{f\circ\log}$, we call it a Brieussel-Zheng group (although their construction is more general).\par
	Defining the map $F\coloneq \frac{\log}{f\circ\log}$,
	the monotonicity of $f$ (resp. of $x\mapsto x/f(x)$) implies that $F/\log$ is non-increasing (resp. $F$ is non-decreasing) and the converse is true. Therefore, any non-decreasing function $F\colon [1,\infty)\to [1,\infty)$ such that $F/\log$ is non-increasing is the isoperimetric profile of a group. This equivalent statement was already noticed in \cite[Theorem~4.26]{delabieQuantitativeMeasureEquivalence2022}.\par
	From this we deduce that the isoperimetric profiles provided by Brieussel and Zheng satisfy Assumption~\eqref{hyp7}. Indeed, let $F$ be a non-decreasing function such that $F/\log$ is non-increasing, and let $C$ be a positive constant. If $C<1$, then the monotonicity of $F$ directly implies the inequality $F(Ct)\leq F(t)$. If $C\geq 1$, we get
	$$\frac{F(Cx)}{\log{(Cx)}}\leq \frac{F(x)}{\log{(x)}}$$
	by monotonicity of $F/\log$, so we have $F(Cx)\leq F(x)\frac{\log{(Cx)}}{\log{(x)}}$, where the right-hand side is less than $2F(x)$ when $x$ is large enough.\par
	As mentionned in the introduction, Escalier~\cite[Theorem~1.7]{escalierBuildingPrescribedQuantitative2024} proves that for every\footnote{Actually, the statement of Theorem~1.7 in~\cite{escalierBuildingPrescribedQuantitative2024} is the following : given a non-decreasing function $F$ such that $F/\log$ is non-decreasing, there exists a group $\G$ such that $\jg\approx F$ and there exists an orbit equivalence coupling from $\G$ to $\Z$ which is $(\varphi_{\varepsilon},\exp\circ F\circ\exp)$ for every $\varepsilon>0$, where $\varphi_{\varepsilon}(x)=F(x)/(\log{F(x)})^{1+\varepsilon}$. The group $\G$ is in fact a Brieussel-Zheng group and the proof of the theorem shows that the existence of such an orbit equivalence holds for every such groups.} Brieussel-Zheng group $\G$ mentionned above, there exists an orbit equivalence coupling from $\G$ to $\Z$ which is $(\varphi_{\varepsilon},\ld^0)$-integrable for all $\varepsilon>0$, where $\varphi_{\varepsilon}(x)=\frac{\jg(x)}{(\log{\jg(x)})^{1+\varepsilon}}$. Hence, we deduce the following.
	
	\begin{theorem}\label{BrieusselZhengEscalier}
		Let $\G$ be a Brieussel-Zheng group and $p>0$. Then there exists a $((\jg)^p,\ld^0)$-integrable measure equivalence from $\G$ to $\Z$ if and only if $p<1$.
	\end{theorem}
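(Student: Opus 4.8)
The plan is to treat the two implications separately, using as black boxes the coupling constructed by Escalier~\cite{escalierBuildingPrescribedQuantitative2024} for the ``if'' direction and Corollary~\ref{cor3} for the ``only if'' direction; the remaining work is purely a comparison of integrability functions. First I would record that every Brieussel--Zheng group $\G$ meets the hypotheses of Corollary~\ref{cor3}: it is amenable, so $\jg$ is unbounded and, being non-decreasing, tends to $+\infty$; it satisfies Assumption~\eqref{hyp7}, as established in the paragraphs preceding the statement; and it is not virtually cyclic, since $\jg\approx F$ with $F/\log$ non-increasing forces $\jg(x)=O(\log x)=o(x)$, which is precisely the characterisation of non-virtual-cyclicity used in the proof of Corollary~\ref{cor3}. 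In particular there is no $(\jg,\ld^0)$-integrable measure equivalence coupling from $\G$ to $\Z$.

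For the ``only if'' direction I would argue by domination, reducing the case $p\geq 1$ to the case $p=1$. Suppose a $((\jg)^p,\ld^0)$-integrable coupling from $\G$ to $\Z$ exists with $p\geq 1$, and fix a generator $\g$ together with the constant $c_\g$ from Definition~\ref{quantitative}. Since $\jg\to+\infty$ is non-decreasing, there is $M$ with $\jg(t)\geq 1$, hence $\jg(t)\leq\jg(t)^{p}$, for all $t\geq M$; splitting the integral over $\XL$ according to whether ${|\cogl(\g,x)|}_{S_{\La}}/c_\g$ exceeds $M$ and bounding $\jg$ by $\jg(M)$ on the complementary finite-measure piece yields
$$\int_{\XL}{\jg\left(\frac{{|\cogl(\g,x)|}_{S_{\La}}}{c_\g}\right)\mathrm{d}\mu_{\XL}(x)}\leq\int_{\XL}{\jg\left(\frac{{|\cogl(\g,x)|}_{S_{\La}}}{c_\g}\right)^{p}\mathrm{d}\mu_{\XL}(x)}+\jg(M)\mu_{\XL}(\XL)<+\infty.$$
Thus the same coupling is $(\jg,\ld^0)$-integrable, contradicting Corollary~\ref{cor3}; hence no such coupling exists for $p\geq 1$.

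For the ``if'' direction, fix $p\in(0,1)$ and take $\varepsilon=1$. Escalier's result provides an orbit equivalence coupling from $\G$ to $\Z$ which is $(\varphi_\varepsilon,\ld^0)$-integrable, where $\varphi_\varepsilon(x)=\jg(x)/(\log\jg(x))^{1+\varepsilon}$. Writing $u=\jg(x)\to+\infty$, the ratio $\jg(x)^{p}/\varphi_\varepsilon(x)=u^{p-1}(\log u)^{1+\varepsilon}=(\log u)^{1+\varepsilon}/u^{1-p}$ tends to $0$ because $1-p>0$, so $(\jg)^p=O(\varphi_\varepsilon)$ as $x\to+\infty$. Domination of integrability (again keeping the constants $c_\g$ and absorbing the finite-measure part where the argument is small) shows that this coupling is also $((\jg)^p,\ld^0)$-integrable. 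Since a $(\varphi,\psi)$-integrable orbit equivalence yields a $(\varphi,\psi)$-integrable measure equivalence coupling with equal fundamental domains, this produces the desired measure equivalence coupling.

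The substantive ingredients are external---Escalier's construction and Corollary~\ref{cor3}, itself resting on Theorem~\ref{thA}---so I do not expect a genuine obstacle here. The only points requiring care are the verification that Brieussel--Zheng groups satisfy the hypotheses of Corollary~\ref{cor3} (in particular that amenability gives $\jg\to+\infty$, so that $\log\jg$ and the domination $\jg\leq(\jg)^p$ are meaningful for large arguments) and the routine bookkeeping of the constants $c_\g$ and the finite-measure tails when transferring integrability between a function and a dominating one.
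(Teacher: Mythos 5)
Your proposal is correct and takes essentially the same route as the paper, which deduces Theorem~\ref{BrieusselZhengEscalier} immediately from Escalier's $(\varphi_{\varepsilon},\ld^0)$-integrable coupling (via the domination $(\jg)^p=O(\varphi_{\varepsilon})$ for $p<1$) and from Corollary~\ref{cor3} (via $\jg\leq(\jg)^p$ for $p\geq 1$, using that the hypotheses of that corollary, including Assumption~\eqref{hyp7}, were verified for Brieussel--Zheng groups in the preceding paragraphs). Your explicit bookkeeping of the constants $c_{\g}$ and the finite-measure tails simply spells out details the paper leaves implicit.
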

	
	\subsection{Coupling between groups of polynomial growth}
	
	\begin{corollary}\label{cor4}
		Assume that $\G$ and $\La$ have polynomial growth of degree $b$ and $a$ respectively, with $b>a$. Then there is no $(\ld^{a/b},\ld^0)$ measure equivalence coupling from $\G$ to $\La$.
	\end{corollary}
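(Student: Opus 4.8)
The plan is to apply Theorem~\ref{thB}, whose hypotheses are phrased directly in terms of volume growth, so that the polynomial growth assumption can be used verbatim without passing through the isoperimetric profile. By the definition of polynomial growth we have $V_{\G}(x)\approx x^b$ and $V_{\La}(x)\approx x^a$. I would therefore take as increasing representatives the genuine power functions $\fg(x)\coloneq x^b$ and $\fl(x)\coloneq x^a$; these satisfy $\fg\approx V_{\G}$ and $\fl\approx V_{\La}$ and are strictly increasing on $\R_+$ since $b>a>0$.

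Next I would verify the three hypotheses of Theorem~\ref{thB} for these representatives, where $\fg^{-1}(x)=x^{1/b}$. Assumption~\eqref{hyp4} reads $x^{1/b}=o(x^{1/a})$, which holds because $b>a$ gives $1/b<1/a$. Assumption~\eqref{hyp5} reads $(Cx)^{1/b}=O(x^{1/b})$, and Assumption~\eqref{hyp6} reads $\fg^{-1}\circ\fl(Cx)=(Cx)^{a/b}=O(x^{a/b})=O(\fg^{-1}\circ\fl(x))$; both are immediate, since multiplying the argument of a power function by a constant only multiplies its value by a constant. Thus all three assumptions are satisfied.

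Theorem~\ref{thB} then yields the absence of a $(\fg^{-1}\circ\fl,\ld^0)$-integrable measure equivalence coupling from $\G$ to $\La$. It remains to identify this excluded integrability with $\ld^{a/b}$: one computes $\fg^{-1}\circ\fl(x)=\fg^{-1}(x^a)=(x^a)^{1/b}=x^{a/b}$, so the map obstructed by Theorem~\ref{thB} is exactly $t\mapsto t^{a/b}$, giving the desired conclusion.

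I do not expect any genuine obstacle here: every condition involved is a trivial statement about power functions, and the only point deserving a word of justification is that we are allowed to replace the volume growths by honest increasing power functions. This is legitimate because both Theorem~\ref{thB} and the notion of $\varphi$-integrability depend only on asymptotic equivalence classes (as recorded in the remark following Definition~\ref{quantitative} and in Remark~\ref{isoinj}). As an alternative route, one could instead invoke Theorem~\ref{thA} together with the fact that a group of polynomial growth of degree $d$ has isoperimetric profile $\jg\approx x^{1/d}$, taking $\fg(x)\coloneq x^{1/b}$ and $\fl(x)\coloneq x^{1/a}$ so that $\fg\circ\fl^{-1}(x)=x^{a/b}$ again; this is equivalent but requires the Coulhon--Saloff-Coste computation of the profile, whereas the volume-growth route uses the hypothesis directly.
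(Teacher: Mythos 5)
Your proof is correct, but it takes a different route from the paper. The paper's own proof of Corollary~\ref{cor4} is the second option you mention only in passing: it cites the Coulhon--Saloff-Coste computation $\jg(x)\approx x^{1/b}$, $\jl(x)\approx x^{1/a}$ and applies Theorem~\ref{thA} (the isoperimetric-profile version), whereas your primary argument goes through Theorem~\ref{thB} with the exact power representatives $\fg(x)=x^b$, $\fl(x)=x^a$. Your verifications of~\eqref{hyp4},~\eqref{hyp5} and~\eqref{hyp6} are complete and correct, the identification $\fg^{-1}\circ\fl(x)=x^{a/b}$ is exact, and your appeal to the fact that $\varphi$-integrability only depends on the $\approx$-class of $\varphi$ is precisely the point recorded in the remark following Definition~\ref{quantitative}, so nothing is missing. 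What each approach buys: your growth route uses the polynomial-growth hypothesis verbatim and rests on the more elementary rigidity statement (Theorem~\ref{th2}, on volume growth) rather than on the isoperimetric-profile theorem plus the Coulhon--Saloff-Coste inequality, so it is arguably the leaner proof of this particular corollary; the paper's profile route, on the other hand, is the one that generalizes to the other applications in Section~\ref{secapp} (Corollaries~\ref{cor5},~\ref{cor6},~\ref{cor7}), where the groups involved, such as lamplighters and iterated wreath products, have exponential growth and volume growth can no longer separate them, while their isoperimetric profiles ($(\log^{\circ n} x)^{1/k}$ versus $x^{1/d}$, etc.) still can. So the paper's choice of Theorem~\ref{thA} here is a matter of uniformity across applications rather than necessity, and for $\G$, $\La$ of polynomial growth the two routes are equally rigorous.
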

	
	\begin{proof}[Proof of Corollary~\ref{cor4}]
		The isoperimetric profiles satisfy $\jg(x)\approx x^{1/b}$ and $\jl(x)\approx x^{1/a}$ (see~\cite[Theorem~1]{coulhonIsoperimetriePourGroupes1993}), so the corollary follows from Theorem~\ref{thA}.
	\end{proof}
	
	As mentionned in the introduction, Delabie, Koivisto, Le Maître and Tessera~\cite{delabieQuantitativeMeasureEquivalence2022} explicitly build an orbit equivalence in the special case of the groups $\Z^d$ for $d\geq 1$, and then show that there exists a measure equivalence coupling from $\Z^b$ to $\Z^a$ (with $b>a$) which is $(\ld^p,\ld^0)$-integrable for every $p<a/b$. But the existence of a $(\ld^{a/b},\ld^0)$-integrable coupling remained unclear. Our Corollary~\ref{cor4} then gives the following complete description.
	
	\begin{theorem}\label{th3}
		Given positive integers $b>a$, there exists a $(\ld^p,\ld^0)$ measure equivalence coupling from $\Z^b$ to $\Z^a$ if and only if $p<a/b$.
	\end{theorem}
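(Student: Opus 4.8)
The plan is to obtain the full dichotomy by juxtaposing the explicit construction of Delabie, Koivisto, Le Maître and Tessera in the subcritical range with our critical obstruction. The groups $\Z^b$ and $\Z^a$ have polynomial growth of degrees $b$ and $a$ respectively, and since $b>a$ they fall exactly within the scope of Corollary~\ref{cor4}. For the \emph{if} direction, I fix $p<a/b$ and invoke the coupling built in~\cite[Theorem~1.9]{delabieQuantitativeMeasureEquivalence2022}, which is a $(\ld^p,\ld^0)$-integrable measure equivalence coupling from $\Z^b$ to $\Z^a$; this furnishes the desired object.

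For the \emph{only if} direction I would argue by contraposition. Suppose $p\geq a/b$ and that a $(\ld^p,\ld^0)$-integrable measure equivalence coupling $(\Omega,X_{\Z^b},\XL,\mu)$ from $\Z^b$ to $\Z^a$ exists, where $\XL$ is the $\Z^a$-fundamental domain, of finite measure $\mu_{\XL}$. Then $\ld^p$-integrability of the cocycle $c_{\Z^b,\Z^a}$ forces its $\ld^{a/b}$-integrability, because $\ld^p(\XL,\mu_{\XL})\subseteq\ld^{a/b}(\XL,\mu_{\XL})$ for $p\geq a/b$ on a finite measure space, and the constants $c_{\g}$ of Definition~\ref{quantitative} are immaterial for power functions. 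Hence the same coupling is $(\ld^{a/b},\ld^0)$-integrable, contradicting Corollary~\ref{cor4}. This rules out any coupling for $p\geq a/b$ and, with the \emph{if} direction, yields the equivalence. For the strict range $p>a/b$ one could instead cite~\cite[Corollary~3.4]{delabieQuantitativeMeasureEquivalence2022}, but the monotonicity argument reduces everything to the critical exponent.

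I do not expect a genuine obstacle at this stage: once Corollary~\ref{cor4} is available, Theorem~\ref{th3} is pure assembly, the supercritical case coming for free from $\ld^p$-monotonicity on the finite measure space $\XL$. All the substance lies upstream, in Corollary~\ref{cor4} and the Theorem~\ref{thA} machinery it rests on (through Lemma~\ref{lemma2}), where the isoperimetric profiles $\jg(x)\approx x^{1/b}$ and $\jl(x)\approx x^{1/a}$ together with the sublinearity of $\fg\circ\fl^{-1}$ produce the contradiction. Thus the hard part is entirely the critical exponent $p=a/b$, which is settled before we reach this theorem.
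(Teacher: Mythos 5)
Your proposal is correct and follows essentially the same route as the paper: the subcritical range is covered by the explicit coupling of \cite[Theorem~1.9]{delabieQuantitativeMeasureEquivalence2022}, the critical exponent $p=a/b$ by Corollary~\ref{cor4}, and the theorem is then pure assembly. Your reduction of the supercritical case $p>a/b$ to the critical one via $\ld^p$-monotonicity on the finite measure space $(\XL,\mu_{\XL})$ (the constants $c_{\g}$ being indeed immaterial for power functions) is a valid, slightly more self-contained substitute for the paper's citation of \cite[Corollary~3.4]{delabieQuantitativeMeasureEquivalence2022}.
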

	
	\subsection{Lamplighter groups}\label{secappwr}
	
	Let $G$ and $F$ be two countable groups and $\bigoplus_{g\in G}{F}$ be the subgroup of $F^G$ consisting of all functions with finite support\footnote{The support of a function $f\colon G\to F$ is the set $\{g\in G\mid f(g)\not =e_F\}$ where $e_F$ is the identity element of $F$.}. We define the action of $G$ on $\bigoplus_{g\in G}{F}$ as follows. For every $g\in G$ and every $f\in\bigoplus_{g\in G}{F}$, the function $g\cdot f\in\bigoplus_{g\in G}{F}$ is defined by:
	$$\forall g'\in G,\ (g\cdot f)(g')= f(g^{-1}g').$$
	Then the \textit{wreath product} $F\wr G$ is the semi-direct product
	$$F\wr G\coloneq \left (\bigoplus_{g\in G}{F}\right )\rtimes G.$$
	When $F$ is a non-trivial finite group, $F\wr G$ is also called a \textit{lamplighter group}.
	
	\begin{corollary}\label{cor5}
		Assume that $G$ and $H$ have polynomial growth of degree $b$ and $a$ respectively, with $b>a$, and let $F$ and $K$ be non-trivial finite groups. Then there is no $(\ld^{a/b},\ld^0)$ measure equivalence coupling from $F\wr G$ to $K\wr H$.
	\end{corollary}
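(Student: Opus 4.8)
The plan is to apply Theorem~\ref{thA} to the groups $\G\coloneq F\wr G$ and $\La\coloneq K\wr H$, exactly as in the proof of Corollary~\ref{cor4}. The only new ingredient compared with the case of polynomial growth is the isoperimetric profile of a lamplighter group, so the main task is to establish that if a finitely generated group $B$ has polynomial growth of degree $d$ and $L$ is a nontrivial finite group, then $j_{1,L\wr B}(x)\approx (\log x)^{1/d}$. This extends the equivalence $j_{1,(\Z/2\Z)\wr\Z}(x)\approx\log x$ recalled in the introduction. To see it, I would take for $A$ the set of all lamp configurations supported on a ball $B_r$ of the base, together with a cursor lying in $B_r$; then $|A|\approx |B_r|\,|L|^{|B_r|}$, while the boundary comes only from moving the cursor out of $B_r$, so $|\partial A|\approx |\partial B_r|\,|L|^{|B_r|}$ and $|A|/|\partial A|\approx |B_r|/|\partial B_r|\approx r$, using the Coulhon--Saloff-Coste estimate $j_{1,B}(x)\approx x^{1/d}$~\cite{coulhonIsoperimetriePourGroupes1993}. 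Since $\log|A|\approx |B_r|\approx r^d$, this gives the lower bound $j_{1,L\wr B}(x)\succcurlyeq (\log x)^{1/d}$; the matching upper bound is an isoperimetric inequality for lamplighters, and I would cite Erschler~\cite{erschlerIsoperimetricProfilesFinitely2003} (see also the lamplighter computations in~\cite{delabieQuantitativeMeasureEquivalence2022}) for it. This upper bound on the profile is the real content of the proof; everything else is a formal verification mirroring Corollary~\ref{cor4}. Applying the equivalence to $\G$ (with $d=b$) and $\La$ (with $d=a$) yields $\jg(x)\approx (\log x)^{1/b}$ and $\jl(x)\approx (\log x)^{1/a}$.

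Next I would set $\fg(x)\coloneq (\log x)^{1/b}$ and $\fl(x)\coloneq (\log x)^{1/a}$ for $x$ large, extended to increasing functions on $\R_+$ as granted by Remark~\ref{isoinj}. These satisfy $\fg\approx\jg$ and $\fl\approx\jl$, and $\fl$ is increasing with $\fl^{-1}(y)=\exp(y^{a})$. The key computation is then
$$\fg\circ\fl^{-1}(x)=\left(\log\exp(x^{a})\right)^{1/b}=x^{a/b},$$
so that the integrability map delivered by Theorem~\ref{thA} is precisely $\ld^{a/b}$, matching the statement.

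It then remains to check Assumptions~\eqref{hyp1},~\eqref{hyp2} and~\eqref{hyp3}, which I expect to be routine. Assumption~\eqref{hyp1} holds because $b>a$ forces $\fg(x)/\fl(x)=(\log x)^{1/b-1/a}\to 0$. Assumption~\eqref{hyp2} holds since for $C\geq 1$ one has $\fg(Cx)=(\log C+\log x)^{1/b}\leq 2^{1/b}\fg(x)$ for $x$ large (and trivially for $C\leq 1$). Assumption~\eqref{hyp3} is immediate from $\fg\circ\fl^{-1}(Cx)=C^{a/b}\,\fg\circ\fl^{-1}(x)$. With all hypotheses in place, Theorem~\ref{thA} rules out any $(\fg\circ\fl^{-1},\ld^0)=(\ld^{a/b},\ld^0)$-integrable measure equivalence coupling from $\G=F\wr G$ to $\La=K\wr H$, which is the desired conclusion.
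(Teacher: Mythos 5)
Your proposal is correct and follows the paper's own route: the paper likewise invokes Erschler's computation $j_{1,F\wr G}(x)\approx(\log x)^{1/b}$, $j_{1,K\wr H}(x)\approx(\log x)^{1/a}$ \cite[Theorem~1]{erschlerIsoperimetricProfilesFinitely2003} and then applies Theorem~\ref{thA}. Your extra material (the F\o lner-set sketch for the lower bound and the explicit verification of Assumptions~\eqref{hyp1}--\eqref{hyp3} with $\fg\circ\fl^{-1}(x)=x^{a/b}$) is sound detail that the paper leaves implicit, not a different argument.
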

	
	\begin{proof}[Proof of Corollary~\ref{cor5}]
		The isoperimetric profiles satisfy $j_{1,F\wr G}(x)\approx (\log{x})^{1/b}$ and $j_{1,K\wr H}(x)\approx (\log{x})^{1/a}$ (see \cite[Theorem~1]{erschlerIsoperimetricProfilesFinitely2003}), so the corollary follows from Theorem~\ref{thA}.
	\end{proof}
	
	In the case $F=K$, $G=\Z^b$ and $H=\Z^a$, using the notion of wreath product for measure-preserving equivalence relations, Corollary~7.4 in~\cite{delabieQuantitativeMeasureEquivalence2022} implies that there exists a $(\ld^p,\ld^0)$ measure equivalence coupling from $F\wr\Z^b$ to $F\wr\Z^a$ for every $p<a/b$. Combined with Corollary~\ref{cor5}, this yields the following theorem.
	
	\begin{theorem}\label{th4}
		Given positive integers $b>a$, there exists a $(\ld^p,\ld^0)$ measure equivalence coupling from $F\wr\Z^b$ to $F\wr\Z^a$ if and only if $p<a/b$.
	\end{theorem}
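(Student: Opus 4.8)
The plan is to establish the two implications separately, drawing on the explicit coupling of Delabie, Koivisto, Le Maître and Tessera for the existence part and on our Corollary~\ref{cor5} for the nonexistence part; the only ingredient genuinely supplied here is an elementary reduction in the supercritical regime.

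For the direction asserting existence when $p<a/b$, there is nothing to add beyond the discussion preceding the statement: taking $F=K$, $G=\Z^b$ and $H=\Z^a$ in \cite[Corollary~7.4]{delabieQuantitativeMeasureEquivalence2022}, their wreath-product construction for measure-preserving equivalence relations produces a $(\ld^p,\ld^0)$ measure equivalence coupling from $F\wr\Z^b$ to $F\wr\Z^a$ for every $p<a/b$, which is exactly what we need.

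For the converse I would argue by contraposition, showing that no such coupling can exist once $p\geq a/b$. The critical case $p=a/b$ is a direct instance of Corollary~\ref{cor5}: since $\Z^b$ and $\Z^a$ have polynomial growth of degrees $b>a$, the isoperimetric profiles $j_{1,F\wr\Z^b}(x)\approx(\log x)^{1/b}$ and $j_{1,F\wr\Z^a}(x)\approx(\log x)^{1/a}$ fall under the hypotheses of Theorem~\ref{thA}, so there is no $(\ld^{a/b},\ld^0)$ coupling. To cover the supercritical range $p>a/b$, I would suppose for contradiction that a $(\ld^p,\ld^0)$ coupling $(\Omega,\XG,\XL,\mu)$ exists; then for each generator $\g$ the function $x\mapsto|\cogl(\g,x)|_{S_{\La}}$ lies in $\ld^p(\XL,\mu_{\XL})$. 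Because $\mu_{\XL}$ is a finite measure and $a/b<p$, the containment $\ld^p\subseteq\ld^{a/b}$ forces the same function into $\ld^{a/b}(\XL,\mu_{\XL})$, so the coupling is already $(\ld^{a/b},\ld^0)$-integrable, contradicting the critical case. Both the existence construction and the critical obstruction being furnished by results already available to us, the sole point demanding attention is this passage from the supercritical to the critical exponent, and it presents no real difficulty thanks to the finiteness of $\mu_{\XL}$.
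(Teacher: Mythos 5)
Your proposal is correct and follows essentially the same route as the paper: existence for $p<a/b$ via \cite[Corollary~7.4]{delabieQuantitativeMeasureEquivalence2022} with $F=K$, $G=\Z^b$, $H=\Z^a$, and nonexistence at the critical exponent via Corollary~\ref{cor5}. The only point you spell out that the paper leaves implicit is the reduction of the supercritical case $p>a/b$ to the critical one, which is indeed routine since $\mu_{\XL}$ is finite (so $\ld^p$-integrability of the cocycle implies $\ld^{a/b}$-integrability, the normalizing constants $c_\g$ being harmless for power functions).
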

	
	\begin{corollary}\label{cor6}
		Assume that $G$ and $\La$ have polynomial growth of degree $b$ and $a$ respectively, with $b>a$, and let $F$ be a non-trivial finite group. Then there is no $(\log^{1/b},\ld^0)$-integrable measure equivalence coupling from $F\wr G$ to $\La$.
	\end{corollary}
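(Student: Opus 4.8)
The plan is to apply Theorem~\ref{thA} directly, exactly as in the proofs of Corollaries~\ref{cor4} and~\ref{cor5}, taking $F\wr G$ in the role of the source group $\G$. The two required inputs are the isoperimetric profiles. Since $G$ has polynomial growth of degree $b$ and $F$ is finite nontrivial, Erschler's computation~\cite{erschlerIsoperimetricProfilesFinitely2003} gives $j_{1,F\wr G}(x)\approx(\log x)^{1/b}$; since $\La$ has polynomial growth of degree $a$, the Coulhon--Saloff-Coste inequality~\cite{coulhonIsoperimetriePourGroupes1993} gives $j_{1,\La}(x)\approx x^{1/a}$. I would then set $\fg(x)\coloneq(\log x)^{1/b}$ and $\fl(x)\coloneq x^{1/a}$ for large $x$, extended so as to be increasing; these satisfy $\fg\approx j_{1,F\wr G}$ and $\fl\approx j_{1,\La}$, and $\fl$ is genuinely invertible (the mild care about monotone representatives being handled as in Remark~\ref{isoinj}).

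The heart of the matter is the computation of the threshold $\fg\circ\fl^{-1}$. Since $\fl^{-1}(y)=y^{a}$, one finds $\fg\circ\fl^{-1}(y)=(\log(y^{a}))^{1/b}=(a\log y)^{1/b}=a^{1/b}(\log y)^{1/b}$, so that $\fg\circ\fl^{-1}\approx\log^{1/b}$, which is precisely the integrability appearing in the statement. This is the one place where the asymmetry between $F\wr G$ and $\La$ (a poly-logarithmic profile against a power profile) actually enters, in contrast to Corollary~\ref{cor5} where both profiles were poly-logarithmic: composing a log-type profile with the inverse of a power-type profile returns a log-type threshold, and the extraneous constant $a^{1/b}$ is absorbed by $\approx$.

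It then remains to check the three hypotheses of Theorem~\ref{thA}. Assumption~\eqref{hyp1} reads $(\log x)^{1/b}=o(x^{1/a})$, which holds because a logarithm is dominated by every positive power. Assumptions~\eqref{hyp2} and~\eqref{hyp3} are both instances of the slow variation of $t\mapsto(\log t)^{1/b}$: for any $C>0$ one has $\log(Cx)=\log C+\log x\leq 2\log x$ for $x$ large, whence $(\log(Cx))^{1/b}\leq 2^{1/b}(\log x)^{1/b}$; applied to $\fg$ this yields~\eqref{hyp2}, and applied to $\fg\circ\fl^{-1}\approx\log^{1/b}$ it yields~\eqref{hyp3}. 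With all hypotheses in place, Theorem~\ref{thA} rules out a $(\fg\circ\fl^{-1},\ld^0)$-integrable coupling from $F\wr G$ to $\La$; since $\fg\circ\fl^{-1}\approx\log^{1/b}$ and $\varphi$-integrability is preserved under $\approx$, this is exactly the claimed absence of a $(\log^{1/b},\ld^0)$-integrable coupling.

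I do not expect a genuine obstacle: once the two profiles are identified, everything reduces to the slow variation of $\log^{1/b}$ and the clean invertibility of $x^{1/a}$. The only steps requiring a touch of care are verifying that $\fl$ can be taken increasing with a well-behaved inverse and confirming that the composition really collapses to $\log^{1/b}$ up to $\approx$ rather than producing a different order of growth.
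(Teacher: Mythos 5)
Your proposal is correct and follows exactly the paper's route: the paper's own proof of Corollary~\ref{cor6} simply cites Erschler for $j_{1,F\wr G}(x)\approx(\log x)^{1/b}$ and Coulhon--Saloff-Coste for $j_{1,\La}(x)\approx x^{1/a}$ and then invokes Theorem~\ref{thA}, leaving implicit the verification of Assumptions~\eqref{hyp1}--\eqref{hyp3} and the computation $\fg\circ\fl^{-1}\approx\log^{1/b}$, all of which you carry out correctly. Your write-up is just a more detailed version of the same argument, with the hypothesis checks (slow variation of $\log^{1/b}$, invertibility of $x^{1/a}$, handling of $\approx$ via Remark~\ref{isoinj}) made explicit.
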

	
	\begin{proof}[Proof of Corollary~\ref{cor6}]
		The isoperimetric profiles satisfy $j_{1,F\wr G}(x)\approx (\log{x})^{1/b}$ and $j_{1,\La}(x)\approx x^{1/a}$ (see \cite[Theorem~1]{erschlerIsoperimetricProfilesFinitely2003} and~\cite[Theorem~1]{coulhonIsoperimetriePourGroupes1993}), so we are done by Theorem~\ref{thA}.
	\end{proof}
	
	In the case $G=\Z$ and $\La=\Z$, it is shown in~\cite[Proposition~6.20]{delabieQuantitativeMeasureEquivalence2022} that there exists a $(\log^p,\ld^0)$-integrable measure equivalence coupling from $F\wr\Z$ to $\Z$ for every $p<1$ (this statement deals with $F=\Z/m\Z$ but remains true for any finite group), and Corollary~\ref{cor6} completes this result.
	
	\begin{theorem}\label{th5}
		Given a finite group $F$, there exists a $(\log^p,\ld^0)$-integrable measure equivalence coupling from $F\wr\Z$ to $\Z$ if and only if $p<1$.
	\end{theorem}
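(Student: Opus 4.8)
The plan is to prove the two implications separately, drawing the existence half from the literature and the non-existence half from Theorem~\ref{thA}. The ``if'' direction requires no new argument: as recalled just above the statement, \cite[Proposition~6.20]{delabieQuantitativeMeasureEquivalence2022} produces a $(\log^p,\ld^0)$-integrable measure equivalence coupling from $F\wr\Z$ to $\Z$ for every $p<1$ (the cited statement is phrased for $F=\Z/m\Z$ but goes through verbatim for an arbitrary finite group $F$).

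For the ``only if'' direction I would first reduce to the critical exponent $p=1$. Since $\log\preccurlyeq\log^p$ for every $p>1$, the remark following Definition~\ref{quantitative} shows that $\log^p$-integrability of a cocycle implies its $\log$-integrability; hence any $(\log^p,\ld^0)$-integrable coupling with $p\geq 1$ is a fortiori $(\log,\ld^0)$-integrable, and it is enough to rule out the borderline case. For that I apply Theorem~\ref{thA} to $\G=F\wr\Z$ and $\La=\Z$, whose isoperimetric profiles are $j_{1,F\wr\Z}(x)\approx\log x$ (\cite[Theorem~1]{erschlerIsoperimetricProfilesFinitely2003}) and $j_{1,\Z}(x)\approx x$ (\cite[Theorem~1]{coulhonIsoperimetriePourGroupes1993}). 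Invoking Remark~\ref{isoinj}, I choose increasing representatives $\fg(x)\coloneq\log(e+x)$ and $\fl(x)\coloneq x$, so that $\fl^{-1}=\mathrm{id}$ and $\fg\circ\fl^{-1}\approx\log$.

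It then remains to verify Assumptions~\eqref{hyp1},~\eqref{hyp2} and~\eqref{hyp3}. Assumption~\eqref{hyp1} holds because $\log x=o(x)$; Assumption~\eqref{hyp2} holds because $\log(e+Cx)\sim\log x$ as $x\to+\infty$, whence $\fg(Cx)=O(\fg(x))$ for every $C>0$; and Assumption~\eqref{hyp3} coincides with~\eqref{hyp2} here, since $\fl^{-1}=\mathrm{id}$. Theorem~\ref{thA} then forbids any $(\fg\circ\fl^{-1},\ld^0)$-integrable coupling, and as $\fg\circ\fl^{-1}\approx\log=\log^{1}$ the equivalence of $\varphi$-integrability under $\approx$ rules out a $(\log,\ld^0)$-integrable coupling from $F\wr\Z$ to $\Z$, completing the proof.

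I do not anticipate a genuine obstacle, as all the analytic content sits in Lemma~\ref{lemma2} and Theorem~\ref{thA}. The two points deserving attention are the monotonicity reduction from $p>1$ to $p=1$, and the fact that, although the hypothesis $b>a$ of Corollary~\ref{cor6} degenerates to $1>1$ in this situation, Assumption~\eqref{hyp1} nonetheless survives because the two profiles are of genuinely different order ($\log$ against linear); thus Theorem~\ref{thA} still applies directly even though Corollary~\ref{cor6} does not cover the case at hand.
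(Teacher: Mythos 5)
Your proposal is correct and takes essentially the same route as the paper: the existence half is quoted from \cite[Proposition~6.20]{delabieQuantitativeMeasureEquivalence2022}, and the non-existence half is Theorem~\ref{thA} applied with $\fg\approx\log$ and $\fl(x)=x$, which is exactly the content of the proof of Corollary~\ref{cor6}. Your two extra precautions---reducing $p>1$ to the critical case $p=1$ via monotonicity of integrability under $\preccurlyeq$, and invoking Theorem~\ref{thA} directly because the hypothesis $b>a$ of Corollary~\ref{cor6} degenerates when $G=\La=\Z$---are both sound and in fact tighten a small imprecision in the paper, which cites Corollary~\ref{cor6} in the case $b=a=1$ that its stated hypothesis does not literally cover.
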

	
	\subsection{Iterated wreath products}\label{secappiterwr}
	
	Given an integer $k\geq 1$ and a finite group $F$, we define groups $H_n(k)$ inductively as follows: $H_0(k)=\Z^k$ and $H_{n+1}(k)=F\wr H_n(k)$. Given a positive integer $n$, the map $\log^{\circ n}$ denotes the composition $\log\circ\ldots\circ\log$ ($n$ times).
	
	\begin{corollary}\label{cor7}
		\begin{itemize}
			\item If $b>a$, then there is no $(\ld^{a/b},\ld^0)$ measure equivalence coupling from $H_n(b)$ to $H_n(a)$.
			\item Given integers $d,k\geq 1$, there is no $((\log^{\circ n})^{1/k},\ld^0)$-integrable measure equivalence coupling from $H_n(k)$ to $\Z^d$.
		\end{itemize}
	\end{corollary}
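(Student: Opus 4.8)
The plan is to reduce both items to Theorem~\ref{thA}; the one genuine ingredient is the isoperimetric profile of the iterated wreath product. I would first prove, by induction on $n$, that
$$j_{1,H_n(k)}(x)\approx(\log^{\circ n}x)^{1/k}.$$
The base case $n=0$ is $j_{1,\Z^k}(x)\approx x^{1/k}$ (see~\cite{coulhonRandomWalksGeometry2000}). For the inductive step I would use the behaviour of the $\ell^1$-isoperimetric profile under a wreath product with a finite lamp group, namely $j_{1,F\wr G}(x)\approx j_{1,G}(\log x)$, following Erschler~\cite{erschlerIsoperimetricProfilesFinitely2003}; applied to $G=H_{n-1}(k)$ together with the inductive hypothesis it gives $j_{1,H_n(k)}(x)\approx(\log^{\circ(n-1)}(\log x))^{1/k}=(\log^{\circ n}x)^{1/k}$.

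For the first item I would set $\G=H_n(b)$ and $\La=H_n(a)$ and, via Remark~\ref{isoinj}, choose increasing functions $\fg\approx(\log^{\circ n})^{1/b}$ and $\fl\approx(\log^{\circ n})^{1/a}$. Writing $y=\fl(x)$ one finds $\log^{\circ n}x\approx y^{a}$, hence $\fg\circ\fl^{-1}(y)\approx y^{a/b}$, so a hypothetical coupling would indeed be $(\ld^{a/b},\ld^0)$-integrable. It then remains to check the three hypotheses of Theorem~\ref{thA}: assumption~\eqref{hyp1} holds because $1/b<1/a$ forces $(\log^{\circ n}x)^{1/b}=o((\log^{\circ n}x)^{1/a})$; assumption~\eqref{hyp2} holds because iterated logarithms absorb multiplicative constants, $\log^{\circ n}(Cx)\approx\log^{\circ n}x$; and assumption~\eqref{hyp3} is trivial since $(Cx)^{a/b}=C^{a/b}x^{a/b}$. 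Theorem~\ref{thA} then gives the conclusion.

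For the second item I would take $\G=H_n(k)$ and $\La=\Z^d$, with $\fg\approx(\log^{\circ n})^{1/k}$ and $\fl\approx x^{1/d}$, so that $\fl^{-1}(x)\approx x^{d}$. Using once more that an iterated logarithm swallows a multiplicative constant (here the $d$ produced by $\fl^{-1}$), I get
$$\fg\circ\fl^{-1}(x)\approx(\log^{\circ n}(x^{d}))^{1/k}\approx(\log^{\circ n}x)^{1/k},$$
which matches the claimed integrability. Assumption~\eqref{hyp1} holds since any fixed power of $\log^{\circ n}x$ is $o(x^{1/d})$, and assumptions~\eqref{hyp2} and~\eqref{hyp3} hold for the same constant-absorption reason as before; Theorem~\ref{thA} again concludes.

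The main obstacle is the isoperimetric-profile computation of the first paragraph: everything afterwards is a routine check of the hypotheses of Theorem~\ref{thA}. The two points requiring care are (i) citing the wreath-product formula $j_{1,F\wr G}\approx j_{1,G}\circ\log$ in a form uniform enough to iterate over the amenable groups $H_{n-1}(k)$, and (ii) verifying that every multiplicative constant that appears---those produced by $\fl^{-1}(x)\approx x^{d}$ as well as the constants $c_{\g_i}$ hidden in the definition of $\varphi$-integrability---is genuinely absorbed by the outermost iterated logarithm. This last point is precisely what assumptions~\eqref{hyp2} and~\eqref{hyp3} encode, and it is why they hold in all the cases at hand.
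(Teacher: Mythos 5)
Your proposal is correct and follows essentially the same route as the paper: identify the isoperimetric profiles $j_{1,H_n(k)}(x)\approx(\log^{\circ n}x)^{1/k}$ and $j_{1,\Z^d}(x)\approx x^{1/d}$, then apply Theorem~\ref{thA} (with Remark~\ref{isoinj} supplying increasing representatives). The only presentational difference is that the paper cites Erschler's theorem directly for the iterated profile instead of inducting on the one-step formula $j_{1,F\wr G}\approx j_{1,G}\circ\log$, and it leaves the verification of Assumptions~\eqref{hyp1}--\eqref{hyp3} implicit, which you carry out correctly.
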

	
	\begin{proof}[Proof of Corollary~\ref{cor7}]
		The isoperimetric profiles satisfy $j_{1,H_n(k)}(x)\approx (\log^{\circ n}{x})^{1/k}$ (see \cite[Theorem~1]{erschlerIsoperimetricProfilesFinitely2003}), and $j_{1,\Z^d}(x)\approx x^{1/d}$. Then the corollary follows from Theorem~\ref{thA}.
	\end{proof}
	
	Using the notion of wreath products of measure-preserving equivalence relations, it is proven in~\cite[Corollary~7.5]{delabieQuantitativeMeasureEquivalence2022} that there exists a $(\ld^p,\ld^0)$ measure equivalence coupling from $H_n(b)$ to $H_n(a)$ for every $p<a/b$. Moreover the composition of couplings yields a $((\log^{\circ n})^{p},\ld^0)$ measure equivalence coupling from $H_n(1)$ to $\Z$ for every $p<1$ (see~\cite[Corollary~7.6]{delabieQuantitativeMeasureEquivalence2022}). Our results allow us to complete these observations.
	
	\begin{theorem}\label{th6}
		Given positive integers $b>a$, there exists a $(\ld^p,\ld^0)$ measure equivalence coupling from $H_n(b)$ to $H_n(a)$ if and only if $p<a/b$.
	\end{theorem}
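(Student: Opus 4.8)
The plan is to assemble the statement from two ingredients that are already in place, so the proof is essentially an \emph{if and only if} bookkeeping argument whose only genuinely new input is Corollary~\ref{cor7}. I would split into the two implications and treat the exponent range $p\geq a/b$ by reducing to the critical case $p=a/b$.

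For the \emph{if} direction, suppose $p<a/b$. Then the existence of a $(\ld^p,\ld^0)$ measure equivalence coupling from $H_n(b)$ to $H_n(a)$ is exactly the content recalled in the paragraph preceding the statement, namely \cite[Corollary~7.5]{delabieQuantitativeMeasureEquivalence2022}: using wreath products of measure-preserving equivalence relations, one promotes the $(\ld^p,\ld^0)$ coupling from $\Z^b$ to $\Z^a$ built in \cite{delabieQuantitativeMeasureEquivalence2022} to the iterated wreath products $H_n(b)=F\wr\cdots\wr\Z^b$ and $H_n(a)=F\wr\cdots\wr\Z^a$. Nothing new is needed here; I would simply cite it.

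For the \emph{only if} direction, I must show that no $(\ld^p,\ld^0)$ coupling exists once $p\geq a/b$. The critical case $p=a/b$ is precisely the first item of Corollary~\ref{cor7}, itself obtained by feeding the isoperimetric profiles $j_{1,H_n(b)}(x)\approx(\log^{\circ n}{x})^{1/b}$ and $j_{1,H_n(a)}(x)\approx(\log^{\circ n}{x})^{1/a}$ into Theorem~\ref{thA}. For the remaining range $p>a/b$ I would reduce to this case: by condition~3 of Definition~\ref{defme} the fundamental domain $\XL$ has finite measure, and on a finite measure space one has the inclusion $\ld^{p}\subset\ld^{p'}$ whenever $0<p'\leq p$. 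Hence $\ld^p$-integrability of the cocycle $\cogl$ forces its $\ld^{a/b}$-integrability (the constants $c_{\g}$ of Definition~\ref{quantitative} being irrelevant to this monotonicity), so any $(\ld^p,\ld^0)$ coupling is in particular a $(\ld^{a/b},\ld^0)$ coupling and contradicts the critical case.

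The main obstacle does not lie in this assembly, which is routine, but is already absorbed into Corollary~\ref{cor7}, and ultimately into checking the hypotheses of Theorem~\ref{thA} for the pair $\fg\approx(\log^{\circ n})^{1/b}$, $\fl\approx(\log^{\circ n})^{1/a}$. There, Assumption~\eqref{hyp1} holds because $1/b<1/a$ makes $\fg=o(\fl)$, while Assumptions~\eqref{hyp2} and~\eqref{hyp3} are the scale-invariance properties $\fg(Cx)=O(\fg(x))$ and $\fg\circ\fl^{-1}(Cx)=O(\fg\circ\fl^{-1}(x))$; for $n\geq 1$ these follow from the fact that iterated logarithms barely move under rescaling, since $\log^{\circ n}(Cx)-\log^{\circ n}(x)\to 0$. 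Granting Corollary~\ref{cor7}, the finite-measure $\ld^p$-inclusion is the only technical point to verify carefully, and it closes the argument.
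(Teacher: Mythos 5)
Your proof follows the paper's route exactly: the paper also obtains Theorem~\ref{th6} by citing \cite[Corollary~7.5]{delabieQuantitativeMeasureEquivalence2022} for the range $p<a/b$ and Corollary~\ref{cor7} (i.e.\ Theorem~\ref{thA} applied to $j_{1,H_n(k)}(x)\approx(\log^{\circ n}{x})^{1/k}$) for the critical exponent, with the supercritical range $p>a/b$ handled, as you do, by the inclusion $\ld^p\subset\ld^{a/b}$ on the finite-measure fundamental domain (a routine step the paper leaves implicit, having noted the analogous nonexistence for $p>a/b$ via the results of \cite{delabieQuantitativeMeasureEquivalence2022} in the introduction). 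One small correction to a side remark: for $n=1$ the difference $\log(Cx)-\log{x}=\log{C}$ is a nonzero constant rather than tending to $0$, but Assumption~\eqref{hyp2} still holds since $\log(Cx)\leq 2\log{x}$ for large $x$ (and for $n=0$ by homogeneity of $x\mapsto x^{1/b}$), so your verification of the hypotheses of Theorem~\ref{thA} stands.
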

	
	\begin{theorem}\label{th7}
		Given integers $d,k\geq 1$, there exists a $((\log^{\circ n})^p,\ld^0)$-integrable measure equivalence coupling from $H_n(1)$ to $\Z$ if and only if $p<1$.
	\end{theorem}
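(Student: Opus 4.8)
The plan is to prove the two implications separately, invoking machinery already set up in the paper; essentially no new work is required beyond assembling Corollary~\ref{cor7} with the construction of Delabie, Koivisto, Le Maître and Tessera. For the \emph{if} direction I would simply quote the composition of couplings recorded just before Theorem~\ref{th6}: by \cite[Corollary~7.6]{delabieQuantitativeMeasureEquivalence2022}, for every $p<1$ there exists a $((\log^{\circ n})^p,\ld^0)$-integrable measure equivalence coupling from $H_n(1)$ to $\Z$. This settles existence whenever $p<1$.

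For the \emph{only if} direction I must show that no such coupling exists once $p\geq 1$. The case $p=1$ is precisely the second bullet of Corollary~\ref{cor7} with $k=1$ and $d=1$, since $(\log^{\circ n})^{1/k}=\log^{\circ n}$ when $k=1$: there is no $(\log^{\circ n},\ld^0)$-integrable coupling from $H_n(1)$ to $\Z$. To reduce the case $p>1$ to this one, I would observe that $((\log^{\circ n})^p,\ld^0)$-integrability entails $(\log^{\circ n},\ld^0)$-integrability. Indeed $\log^{\circ n}(x)\geq 1$ for $x$ large enough, hence $\log^{\circ n}(x)\leq (\log^{\circ n}(x))^p$ there, so these two non-decreasing maps satisfy $\log^{\circ n}\preccurlyeq (\log^{\circ n})^p$; as in the remark following Definition~\ref{quantitative}, for each generator $\g$ one keeps the same constant $c_{\g}$ and compares the integrands, the region where they differ carrying a bounded integrand over a set of finite $\mu_{\XL}$-measure. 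Thus a $((\log^{\circ n})^p,\ld^0)$-integrable coupling would also be $(\log^{\circ n},\ld^0)$-integrable, contradicting the $p=1$ case. This rules out every $p\geq 1$ and completes the proof.

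The argument contains no genuinely hard step: the only point needing a line of care is the one-sided monotonicity of integrability classes under $\preccurlyeq$, which is a routine consequence of the definition of $\varphi$-integrability together with the finiteness of $\mu_{\XL}$. All substantive content is carried by Corollary~\ref{cor7}, itself deduced from Theorem~\ref{thA} via the isoperimetric profile $j_{1,H_n(1)}(x)\approx\log^{\circ n}x$, and by the explicit coupling of \cite{delabieQuantitativeMeasureEquivalence2022}.
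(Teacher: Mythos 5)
Your proposal is correct and takes essentially the same route as the paper, which obtains Theorem~\ref{th7} by combining the second bullet of Corollary~\ref{cor7} (in the case $k=d=1$) with the coupling of \cite[Corollary~7.6]{delabieQuantitativeMeasureEquivalence2022}, exactly as you assemble it. Your explicit reduction of the case $p>1$ to $p=1$ via the one-sided monotonicity of $\varphi$-integrability (using the finiteness of $\mu_{\XL}$) is a routine point the paper leaves implicit, and your treatment of it is sound.
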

	
	\begin{remark}
		All the measure equivalence couplings provided in~\cite{escalierBuildingPrescribedQuantitative2024} and~\cite{delabieQuantitativeMeasureEquivalence2022} and that we have mentioned in Section~\ref{secapp} actually come from a construction of orbit equivalences between the groups, with the same integrability for the cocycles. Then Theorems~\ref{th3},~\ref{th4},~\ref{th5},~\ref{th6} and~\ref{th7} remain valid in the context of quantitative orbit equivalence.
	\end{remark}
	
	\bibliographystyle{alphaurl}
	\bibliography{biblio}
	
	{\bigskip
		\footnotesize
		
		\noindent C.~Correia, \textsc{Université Paris Cité, Institut de Mathématiques de Jussieu-Paris Rive Gauche, 75013 Paris, France}\par\nopagebreak\noindent
		\textit{E-mail address: }\texttt{corentin.correia@imj-prg.fr}}
	
\end{document}